\newtheoremstyle{thm}{}{}{\slshape}{}{\scshape}{.}{0.5em}{}
\newtheoremstyle{def}{}{}{}{}{\scshape}{.}{0.5em}{}
\newtheoremstyle{rmk}{}{}{}{}{\scshape}{.}{0.5em}{}
\newtheoremstyle{claim}{}{}{}{}{\slshape}{.}{0.5em}{}
\theoremstyle{thm}
\newtheorem{newstatement}{newstatement}
\newtheorem{lemma}[newstatement]{Lemma}
\newtheorem{theorem}[newstatement]{Theorem}
\newtheorem{corollary}[newstatement]{Corollary}
\newtheorem{proposition}[newstatement]{Proposition}
\theoremstyle{def}
\newtheorem{definition}[newstatement]{Definition}
\theoremstyle{rmk}
\newtheorem{remark}[newstatement]{Remark}
\theoremstyle{claim}
\renewcommand{\section}{\@startsection%
{section}
{1}
{0mm}
{1.5\bigskipamount}
{0.5\bigskipamount}
{\centering\normalsize\sc}}
\renewcommand{\paragraph}{\@startsection%
{paragraph}
{4}
{0mm}
{\bigskipamount}
{-1.25ex}
{\normalsize\sl}}
\let\expandafter\oldproof\csname\string\proof\endcsname
\let\oldendproof\endproof
\renewenvironment{proof}[1][\proofname]{%
  \oldproof[\slshape #1]%
}{\oldendproof}
\def\provedboxcontents#1{$\square$}
\def\fieldstyle{\Bbb} 
\def\p_#1{p_{\kern-1.5pt_#1}}
\def\emph#1{{\em #1}}
\def\textbf#1{{\bf #1}}
\def\MR#1{\relax}
\newcommand{\Cl}{\mathop{\mathrm{Cl}}\nolimits} 
\newcommand{\Int}{\mathop{\mathrm{Int}}\nolimits} 
\newcommand{\Spin}{\mathop{\mathrm{Spin}}\nolimits}
\newcommand{\crit}{\mathop{\mathrm{Crit}}\nolimits}
\newcommand{\GL}{\mathop{\mathrm{GL}}\nolimits}
\newcommand{\SO}{\mathop{\mathrm{SO}}\nolimits}
\newcommand{\U}{\mathop{\mathrm{U}}\nolimits}
\newcommand{\BSO}{\mathop{\mathrm{BSO}}\nolimits}
\newcommand{\BU}{\mathop{\mathrm{BU}}\nolimits}
\newcommand{\R}{\fieldstyle R}
\newcommand{\C}{\fieldstyle C}
\newcommand{\Z}{\Bbb Z}
\newcommand{\CP}{\C\mathrm P}
\let\hash\#
\renewcommand{\#}{\mathbin{\hash}}
\def\varemptyset{{\text{\raise.21ex\hbox{$\not$}}\mkern.15mu\mathrm{O}\mkern.15mu}}
\let\emptyset\varemptyset
\let\geq\geqslant
\let\leq\leqslant
\let\phi\varphi
\let\epsilon\varepsilon
\title
[Embedding almost complex manifolds]
{On embeddings of almost complex manifolds in almost complex Euclidean spaces}
\author{Antonio J. Di Scala, Naohiko Kasuya and Daniele Zuddas}
\address{\normalfont Antonio J. Di Scala, Dipartimento di Scienze Matematiche `G.L. Lagrange', Politecnico di Torino, Corso Duca degli Abruzzi 24, 10129, Torino, Italy.}
\email{antonio.discala@polito.it}
\address{\normalfont Naohiko Kasuya, Department of Social Informatics, Aoyama Gakuin University, 5-10-1 Fuchinobe, Chuo-ku, Sagamihara, Kanagawa 252-5258, Japan.}
\email{nkasuya@si.aoyama.ac.jp}
\address{\normalfont Daniele Zuddas, Korea Institute for Advanced Study, 85 Hoegiro, Dongdaemun-gu, Seoul 02455, Republic of Korea.}
\email{zuddas@kias.re.kr}
\date{}
\keywords{Almost complex manifold, pseudo-holomorphic embedding, immersion}
\subjclass[2010]{32Q60, 57R40, 57R42}
\begin{document}

\begin{abstract}
We prove that any compact almost complex manifold $(M^{2m}, J)$ of real dimension $2m$ admits a pseudo-holomorphic embedding in $(\R^{4m+2}, \tilde{J})$ for a suitable positive almost complex structure $\tilde J$. Moreover, we give a necessary and sufficient condition, expressed in terms of the Segre class $s_m(M, J)$, for the existence of an embedding or an immersion in $(\R^{4m}, \tilde{J})$. We also discuss the pseudo-holomorphic embeddings of an almost complex 4-manifold in $(\R^6, \tilde J)$.
\end{abstract}

\maketitle

\section{Introduction}

In this article we give some existence results of pseudo-holomorphic embeddings of almost complex manifolds into almost complex Euclidean spaces. More precisely, we prove that such an embedding exists if the dimension of the ambient Euclidean space is at least $4m+2$, where $2m$ is the real dimension of the source manifold. We also give results about pseudo-holomorphic immersions and embeddings into $\R^{4m}$ under certain assumptions on the Chern class. The Euclidean space is endowed with a suitable non-standard almost complex structure, which is not integrable in general. 

As a further result, we provide a condition for the existence of codimension-two pseudo-holomorphic embeddings of almost-complex 4-manifolds.

We notice that Theorem~\ref{PHE} below represents a major improvement of the main result of \cite{DZ11}, where the pseudo-holomorphic embedding was in $\R^{6m}$. We reduce the dimension of the ambient Euclidean space to $4m+2$. It is the best possible result, since there is an obstruction for pseudo-holomorphic embeddings in $\R^{4m}$, as stated in Theorem~\ref{PHI}. 
We also fix a mistake in the proof of the main result of \cite{DZ11} that has to do with the homotopy type of the space of linear complex structures on $\R^{2n}$. These spaces have been considered to be $(n-1)$-connected in \cite{DZ11}, but this is false.
In the present paper we prove a stronger result, which is  represented by Theorem~\ref{PHE} below.

The space of positive linear complex structures on $\R^{2n}$ is homotopy equivalent to the symmetric space $\Gamma(n) = \SO(2n)/\U(n)$.
If $k\leq 2n-2$, the homotopy groups $\pi_k(\Gamma(n))$ are said to be stable.
The stable homotopy groups of $\Gamma(n)$ have been computed by Bott \cite{Bo59}, who showed that for $k\leq 2n-2$,
\begin{align*}
\pi _k (\Gamma(n)) \cong \pi_{k+1} ({\SO(2n)}) \cong
\begin{cases}
 0 & \mbox{for } k\equiv 1,3,4,5\\
\Z & \mbox{for } k\equiv 2,6\\
\Z_2 & \mbox{for } k\equiv 0,7
\end{cases}
\pmod{8}.
\end{align*}

For the unstable homotopy groups of $\Gamma(n)$, see \cite{Ma61}, \cite{Ha63}, \cite{Ka78} and \cite{Os84}.

Now we state our main results, which are the following.

\begin{theorem}~\label{PHE}
Any almost complex manifold $(M^{2m}, J)$ of real dimension $2m$ can be pseudo-holomorphically embedded
in $(\R^{4m+2}, \tilde{J})$ for a suitable positive almost complex structure $\tilde{J}$.
\end{theorem}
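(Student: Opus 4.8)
The plan is to realize the embedding as a composition: first take an ordinary smooth embedding $f_0 \colon M^{2m} \hookrightarrow \R^{4m+1} \subset \R^{4m+2}$, which exists by the Whitney embedding theorem (indeed for $4m+2 > 2\cdot 2m$ this is elementary), and then construct a positive almost complex structure $\tilde J$ on $\R^{4m+2}$ that makes $f_0$ pseudo-holomorphic. For $f_0$ to be pseudo-holomorphic with respect to $\tilde J$ we need $\tilde J$ to restrict, along the image $N = f_0(M)$, to a complex structure on the ambient tangent spaces that preserves the tangent subbundle $df_0(TM)$ and agrees there with $J$ (transported via $df_0$). The real work is therefore not the embedding, which is free, but producing $\tilde J$.

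\medskip

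The key construction is to prescribe $\tilde J$ first along $N$ and then extend it to all of $\R^{4m+2}$. Along $N$, the ambient tangent bundle splits as $TM \oplus \nu$, where $\nu$ is the normal bundle of rank $2m+2$. On the $TM$ summand we must use $J$; on the normal bundle we need to choose a fibrewise complex structure $J_\nu$ compatible with orientation so that $J \oplus J_\nu$ is a positive complex structure on $T\R^{4m+2}|_N \cong \R^{4m+2}$. Since $\nu$ has even rank $2m+2$, such a $J_\nu$ can be chosen fibrewise; the substance is to choose it so that the resulting section of the bundle of complex structures over $N$ extends over all of $\R^{4m+2}$. Because $N$ is a compact $2m$-dimensional submanifold of $\R^{4m+2}$, it has a tubular neighborhood and, by general position, the inclusion $N \hookrightarrow \R^{4m+2}$ factors (up to homotopy) through a complex that allows extension once the obstruction classes vanish. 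Here is where Bott's computation of the stable homotopy of $\Gamma(n) = \SO(2n)/\U(n)$, quoted in the excerpt, becomes decisive: the obstructions to extending a map into $\Gamma(2m+1)$ over the cells of $\R^{4m+2}$ (relative to $N$) live in $H^{k+1}(\R^{4m+2}, N; \pi_k(\Gamma(2m+1)))$, and the dimension count $4m+2$ keeps us within or just at the edge of the stable range $k \le 2\cdot(2m+1) - 2 = 4m$.

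\medskip

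Concretely, I would proceed as follows. First, fix the smooth embedding and its tubular neighborhood, and identify the space of positive linear complex structures on each fibre of $T\R^{4m+2}$ with $\Gamma(2m+1)$. A choice of $\tilde J$ is then a section of the associated $\Gamma(2m+1)$-bundle, which over the contractible $\R^{4m+2}$ is the same as a map $\R^{4m+2} \to \Gamma(2m+1)$. The prescribed values along $N$ give a map $g_0 \colon N \to \Gamma(2m+1)$ determined by $J$ and the chosen normal structure, and the task is to extend $g_0$ over $\R^{4m+2}$. Since $N$ is $2m$-dimensional and $\R^{4m+2}$ retracts to a point, one extends cell by cell over a CW structure on $\R^{4m+2}$ relative to $N$; the primary and higher obstructions vanish provided the relevant homotopy groups of $\Gamma(2m+1)$ in degrees up to $2m$ are killed by the top-dimensional constraints, which the stable range $k \le 4m$ and Bott's table guarantee for the low-degree groups that actually appear. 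The extension over the top cells, which could produce nontrivial elements of $\pi_{4m+1}(\Gamma(2m+1))$, is the delicate point, but this lies just outside the relevant skeleton because $N$ has dimension only $2m$, so these high cells attach away from the constraint and can be filled by a direct choice.

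\medskip

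The main obstacle I anticipate is \emph{positivity}: not merely finding a complex structure extending $J$, but ensuring $\tilde J$ is everywhere compatible with the standard orientation of $\R^{4m+2}$ (that is, a section into the \emph{positive} component $\Gamma(2m+1)$ rather than the full space of complex structures). One must verify that the normal structure $J_\nu$ can be selected so that $g_0$ actually lands in $\Gamma(2m+1)$ and that the extension stays in this connected piece; this is exactly the homotopy-theoretic content where the erroneous $(n-1)$-connectedness claim of \cite{DZ11} must be replaced by the correct, and more subtle, analysis based on Bott's groups. Once $\tilde J$ is constructed as a genuine positive almost complex structure on $\R^{4m+2}$ agreeing with $J$ along $N$ and preserving $TN$, the map $f_0$ is pseudo-holomorphic by construction, completing the proof.
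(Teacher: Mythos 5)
Your overall strategy---embed smoothly, prescribe the almost complex structure along the image $N=f_0(M)$ as $J\oplus J_\nu$, and then extend it over all of $\R^{4m+2}$---has the same shape as the paper's argument (this is exactly the content of Proposition~\ref{key}). But the step where you perform the extension is wrong, and it is the crux of the theorem. Since $\R^{4m+2}$ is contractible, the long exact sequence of the pair gives $H^{k+1}(\R^{4m+2},N;\pi_k(\Gamma(2m+1)))\cong H^k(N;\pi_k(\Gamma(2m+1)))$ for $k\geq 1$; equivalently, by the homotopy extension property, $g_0\colon N\to\widetilde\Gamma(2m+1)$ extends over $\R^{4m+2}$ if and only if $g_0$ is null-homotopic. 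These obstruction groups do not vanish for dimension or stability reasons: Bott's table, which you invoke as the decisive input, gives $\pi_k(\Gamma(n))\cong\Z$ for $k\equiv 2,6$ and $\Z_2$ for $k\equiv 0,7 \pmod 8$ in the stable range, so they are nonzero in many degrees $k\leq 2m$ where $H^k(N)$ is also nonzero. Concretely, the obstruction in $H^2(N;\pi_2(\Gamma(2m+1)))=H^2(N;\Z)$ is essentially a first Chern class of the bundle $(TM\oplus\nu, J\oplus J_\nu)$, which has no reason to vanish for an arbitrary choice of $J_\nu$. You also have the picture inverted: the top-dimensional cells you worry about are harmless (for $k>2m$ the groups $H^k(N;\pi_k)$ vanish because $\dim N=2m$), while the low-dimensional obstructions, governed by the topology of $M$, are the real issue. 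Indeed, that a careful choice of $J_\nu$ is required---and can fail in codimension $2m$---is exactly the content of Theorem~\ref{PHI}, where the residual obstruction $I(M,J)$ survives. A secondary gap: your claim that $J_\nu$ exists ``fibrewise'' because $\nu$ has even rank is not a proof; a complex structure on a real vector bundle is a global continuous datum and is obstructed in general (oriented even-rank bundles need not admit one).

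Both gaps are closed by the ingredient your proposal never uses: the connectivity of Stiefel manifolds. The paper first produces a \emph{complex-linear} monomorphism $F\colon (TM,J)\to (T\R^{4m+2},J_1)$ covering $f$; such monomorphisms are sections of a bundle with fiber the complex Stiefel manifold $V_m(\C^{2m+1})$, which is $(2m+2)$-connected, so a section exists. Then, since the real Stiefel manifold $V_{2m}(\R^{4m+2})$ is $(2m+1)$-connected and $\dim M=2m$, \emph{any} two monomorphisms covering $f$ are homotopic through monomorphisms; in particular $Tf\simeq F$. This homotopy does double duty: it transports the tautological complex structure on the normal bundle of $F$ (a complex subbundle of $T\C^{2m+1}$) to a complex structure $J'$ on $\nu_f$, \emph{and} it exhibits $J\oplus J'$, viewed as a map $N\to\widetilde\Gamma(2m+1)$, as homotopic to the constant map $J_1$---which is precisely the null-homotopy needed to extend $\tilde J$ over $\R^{4m+2}$. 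Bott's computation plays no role in this theorem (the paper uses it elsewhere, for the $\R^6$ result); what carries the proof is that the codimension $2m+2$ puts both Stiefel manifolds above the dimension of $M$ in connectivity. Your proposal, as written, stops exactly where this argument would have to begin.
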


Notice that the codimension $2m + 2$ improves the $4m$ of \cite[Theorem 1]{DZ11} for $m > 1$.

We denote by $c(M, J)$ the total Chern class of $(M, J)$, and by $s(M, J) = c(M, J)^{-1}$ the total Segre class of $(M, J)$. Let $s_k(M, J) \in H^{2k}(M)$ be the $2k$-dimensional term of $s(M, J)$. We also define $$I(M, J) = - \frac{1}{2} \langle s_m(M, J), [M] \rangle \in \Z.$$

\begin{remark}
$I(M, J)$ is an integer because the normal Euler number of an immersion of $M^{2m}$ in $\R^{4m}$ is even by a theorem of Whitney \cite{Wh44}. Indeed, $-2I(M, J)$ is going to be the normal Euler number, see the proof of Theorem~\ref{PHI}.
\end{remark}

\begin{theorem}~\label{PHI}
An almost complex manifold $(M^{2m}, J)$ of real dimension $2m$ can be pseudo-holomorphically immersed
in $(\R^{4m}, \tilde{J})$ for a suitable positive almost complex structure $\tilde{J}$ if and only if $I(M, J) \geq 0$. In this case, there is a self-transverse pseudo-holomorphic immersion $(M, J) \looparrowright (\R^{4m}, \tilde J)$ with exactly $I(M, J)$ double points. Thus, $(M, J)$ can be pseudo-holomorphically embedded in $(\R^{4m}, \tilde{J})$, for some $\tilde J$, if and only if $I(M, J) = 0$.
\end{theorem}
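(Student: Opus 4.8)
The plan is to prove Theorem~\ref{PHI} by translating the problem of constructing a pseudo-holomorphic immersion $f \colon (M,J) \looparrowright (\R^{4m}, \tilde J)$ into a bundle-theoretic problem. The key observation is that a pseudo-holomorphic immersion induces a complex structure on the normal bundle $\nu$ of $f$, so that the complexified tangent bundle splits as $f^*T\R^{4m} \cong TM \oplus \nu$ as complex vector bundles of complex ranks $m$ and $m$ respectively. Since $f^*T\R^{4m}$ is trivial as a real bundle and carries the complex structure $\tilde J$, the first step is to understand when one can choose $\tilde J$ along $f(M)$ so that this splitting holds. Following the strategy already used for Theorem~\ref{PHE}, the existence of a pseudo-holomorphic immersion should reduce, via an $h$-principle or the Hirsch--Smale immersion theory adapted to the almost complex setting, to the existence of a complex bundle monomorphism $TM \hookrightarrow \underline{\C}^{2m}$ covering the identity, i.e.\ a stable complexification datum together with a complex complement $\nu$.

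First I would set up the characteristic-class bookkeeping. If such a pseudo-holomorphic immersion exists, then $c(TM)\,c(\nu) = c(\underline{\C}^{2m}) = 1$, whence $c(\nu) = c(TM)^{-1} = s(M,J)$. In particular the top Chern class satisfies $c_m(\nu) = s_m(M,J)$, and the normal Euler number of the immersion is $\langle e(\nu),[M]\rangle = \langle c_m(\nu),[M]\rangle = \langle s_m(M,J),[M]\rangle = -2I(M,J)$. By Whitney's formula relating the normal Euler number of a self-transverse immersion of an oriented $2m$-manifold in $\R^{4m}$ to the algebraic number of double points, the signed count of double points equals $-\tfrac12\langle e(\nu),[M]\rangle = I(M,J)$. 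This immediately gives the necessity of $I(M,J) \ge 0$: the almost complex structure forces all double points to be positive (pseudo-holomorphic intersections are positive), so the algebraic and geometric double point counts coincide and must be nonnegative.

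For the converse, assuming $I(M,J) \ge 0$, I would first produce a complex bundle $\nu$ of rank $m$ over $M$ with $c(\nu) = s(M,J)$ realizing the correct obstructions, then invoke the immersion-theoretic $h$-principle to obtain an actual pseudo-holomorphic immersion with $\tilde J$ constructed along $f(M)$ and extended to $\R^{4m}$ as in the proof of Theorem~\ref{PHE}. The delicate point is to arrange exactly $I(M,J)$ double points rather than merely the correct algebraic count: starting from any self-transverse representative, one must cancel excess pairs of double points or, if necessary, introduce Whitney-type cusps, while preserving pseudo-holomorphicity. Here the positivity of the almost complex structure is crucial, since it guarantees that every double point contributes $+1$, so the algebraic count $I(M,J)$ equals the geometric count and no cancellation of oppositely-signed pairs is possible or needed; one only needs to realize the minimal number $I(M,J)$ directly.

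The main obstacle I expect is the last step: passing from the homotopy-theoretic existence of the required complex normal bundle and bundle monomorphism to a genuine pseudo-holomorphic immersion realizing precisely $I(M,J)$ double points, all positive. This requires a careful local model for a positive double point compatible with $\tilde J$, and a Whitney-trick argument adapted to the almost complex category to ensure no superfluous double points survive; the final assertion about embeddings ($I(M,J)=0 \iff$ embedding) then follows since an embedding is exactly a zero-double-point immersion, so $I(M,J) = 0$ is both necessary (by the Euler-number computation above) and sufficient (by realizing the zero-double-point immersion).
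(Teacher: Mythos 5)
Your ``only if'' direction matches the paper's: triviality of the complex bundle $TM\oplus\nu_f$ gives $c(\nu_f)=s(M,J)$, Whitney's formula identifies the normal Euler number with $-2I(f)$, and positivity of pseudo-holomorphic double points forces $I(M,J)=I(f)\geq 0$. The genuine gap is in the ``if'' direction, and it is one of order of operations. You propose to first obtain a pseudo-holomorphic immersion (via an $h$-principle, with $\tilde J$ extended ``as in Theorem~\ref{PHE}'') and then to reduce to exactly $I(M,J)$ double points by ``a Whitney-trick argument adapted to the almost complex category \dots while preserving pseudo-holomorphicity''. That step would fail: precisely because pseudo-holomorphic double points are all positive (as you yourself observe), there are never oppositely-signed pairs to cancel, so no Whitney trick compatible with pseudo-holomorphicity exists --- and none is needed. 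Note also that Hirsch--Smale by itself only produces a \emph{smooth} immersion realizing a homotopy class of tangent monomorphisms; it cannot output a pseudo-holomorphic one, since $\tilde J$ does not yet exist at that stage. The paper's resolution is to do all double-point minimization in the smooth category \emph{before} any almost complex structure on $\R^{4m}$ enters: the $2m$-connectivity of $V_m(\C^{2m})$ gives a complex monomorphism $F\colon TM\to T\C^{2m}$; Hirsch--Smale gives a smooth immersion $f'$ with $Tf'$ homotopic to $F$; the Chern class computation shows the algebraic self-intersection of $f'$ equals $I(M,J)$; and classical Whitney theory replaces $f'$, through a regular homotopy, by a smooth immersion $f$ with exactly $I(M,J)$ double points, all positive. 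Only after this is $\tilde J$ constructed.

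Moreover, ``extend $\tilde J$ as in Theorem~\ref{PHE}'' glosses over the two issues created by double points, which are the real content of this part of the paper's proof. First, well-definedness at a double point $r=f(p)=f(q)$: the two branches induce competing complex structures on $T_r\R^{4m}$. The paper isotopes $f$ so that the branches are orthogonal at each double point, and then homotopes the normal structure $J'$ in disjoint balls around $p$ and $q$ so that $J'_p=J_q$ and $J'_q=J_p$ (possible since $\widetilde\Gamma(m)$ is connected); only then is $J\oplus J'$ a well-defined map $g\colon f(M)\to\widetilde\Gamma(2m)$. Second, extending $g$ over $\R^{4m}$ requires $g$ to be null-homotopic on $f(M)$, which is not $M$ but $M$ with finitely many pairs of points identified, hence homotopy equivalent to $M\vee(\vee_n S^1)$; the paper uses simple connectivity of $\widetilde\Gamma(2m)$ to kill $g$ on the wedge circles, so that $g$ factors through a map $M\to\widetilde\Gamma(2m)$ homotopic to $g\circ f$, which is null-homotopic by construction. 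Neither point appears in your sketch beyond the phrase ``a careful local model for a positive double point'', so as written the proposal does not close.
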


The first part of the following corollary is immediate because $s_1(S, J) = -c_1(S, J)$, hence $I(S, J) = 1 - g(S)$ for a closed Riemann surface $(S, J)$, see also \cite{DSV2010}. The second part is a consequence of known facts, that is the existence of a torus fibration on $S^4$, which is due to Matsumoto \cite{Mat82}. However, we incorporate this result in the corollary because this construction enlightens nicely and more concretely our results in the case of elliptic curves as a family of pseudo-holomorphic curves in $\R^4$.

\begin{corollary}\label{pseudo/thm}
$(S, J)$ can be pseudo-holomorphically immersed in $(\R^4, \tilde J)$, for some $\tilde J$, if and only if $S$ is either a torus or a sphere. Moreover, $\tilde J$ can be suitably chosen so that $(\R^4, \tilde J)$ admits a two-dimensional holomorphic singular foliation with the property that the regular leaves but one are pseudo-holomorphically embedded tori, the other regular leaf is a pseudo-holomorphically embedded cylinder $S^1 \times \R$, and the singular leaf is a pseudo-holomorphically immersed sphere with one node.
\end{corollary}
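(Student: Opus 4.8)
The plan is to handle the two assertions in turn. For the classification statement I would simply evaluate $I(S,J)$. Since $\dim_\R S = 2$, the total Chern class is $c(S,J) = 1 + c_1(S,J)$, so inverting gives $s(S,J) = 1 - c_1(S,J)$ (the higher terms vanish for dimensional reasons) and hence $s_1(S,J) = -c_1(S,J)$. Therefore
\[
I(S,J) = -\tfrac{1}{2}\langle s_1(S,J),[S]\rangle = \tfrac{1}{2}\langle c_1(S,J),[S]\rangle = \tfrac{1}{2}\chi(S) = 1 - g(S).
\]
By Theorem~\ref{PHI} a pseudo-holomorphic immersion in $(\R^4,\tilde J)$ exists if and only if $I(S,J)\ge 0$, i.e. $g(S)\le 1$, which is precisely the case of the sphere or the torus.

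For the foliation I would realize the family of elliptic curves as the fibres of Matsumoto's genus-$1$ achiral Lefschetz fibration $\pi\colon S^4\to S^2$ of \cite{Mat82}: its generic fibre is a torus and it has exactly two critical points, one of positive (Lefschetz) and one of negative (anti-Lefschetz) type, whose singular fibres are nodal spheres --- consistent with $\chi(S^4)=2=1+1$, the sum of the two nodal-sphere Euler characteristics. I would then set $\R^4 = S^4\setminus\{p\}$, taking $p$ to be the node of the \emph{anti}-Lefschetz singular fibre. Under this identification the fibres of $\pi$ restrict to a singular foliation of $\R^4$ whose generic leaves are embedded tori and whose Lefschetz singular fibre is a sphere with one node; the anti-Lefschetz fibre, having lost its node $p$, becomes an embedded open cylinder, since a nodal sphere is a two-sphere with two points identified and removing the node recovers $S^2$ minus two points, i.e. $S^1\times\R$.

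It remains to build a positive almost complex structure $\tilde J$ on $\R^4$ making every leaf pseudo-holomorphic. The decisive point --- and the reason for puncturing at the \emph{anti}-Lefschetz node rather than the Lefschetz one --- is that a pseudo-holomorphic curve can only carry positive self-intersections, so the node that survives in $\R^4$ must be the positive one; in the holomorphic local model $(z,w)\mapsto zw$ this node is exactly the transverse positive double point of the fibre $\{zw=0\}$. Near the surviving critical point I would therefore take $\tilde J$ to be the standard structure, making the singular leaf a pseudo-holomorphically immersed sphere with a single node --- in agreement with $I(S^2,J)=1$ from Theorem~\ref{PHI}. Away from that point the leaves form a smooth oriented $2$-plane field, on which $\tilde J$ is defined by rotation through a right angle and then extended to the complementary normal planes to a positive structure with $\tilde J^2=-\id$, exactly as in the proofs of Theorems~\ref{PHE} and \ref{PHI}.

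The step I expect to be the main obstacle is the global smoothness and positivity of $\tilde J$: matching the leafwise rotation to the standard model across the singular leaf and through the surviving critical point, and verifying that the resulting $\tilde J$ is everywhere positive. Once this is done, the tangent distribution to the leaves is $\tilde J$-invariant wherever defined, so $\pi$ becomes a holomorphic singular foliation whose regular leaves are the embedded tori and the one embedded cylinder, and whose unique singular leaf is the immersed nodal sphere, as claimed.
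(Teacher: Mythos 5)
Your first paragraph is exactly the paper's argument for the immersion classification: $s_1(S,J)=-c_1(S,J)$ gives $I(S,J)=\tfrac12\langle c_1(S,J),[S]\rangle=1-g(S)$, and Theorem~\ref{PHI} converts $I(S,J)\ge 0$ into $g(S)\le 1$. Your plan for the foliation is also the paper's plan: take Matsumoto's genus-one fibration $f\colon S^4\to S^2$ with one positive and one negative critical point lying on distinct fibers, delete the negative node $a_-$ to identify $S^4-\{a_-\}$ with $\R^4$, observe that the leaves are then embedded tori, the embedded cylinder $\Sigma_--\{a_-\}\cong S^1\times\R$, and the positively-noded immersed sphere $\Sigma_+$, and build $\tilde J$ from a leafwise $90^\circ$ rotation while keeping an integrable model near the positive critical point. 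The reason you give for deleting the negative rather than the positive node (positivity of self-intersections of pseudo-holomorphic curves) is also correct.

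However, the proof is incomplete at the step you yourself flag and then defer: as stated, your two prescriptions for $\tilde J$ --- the standard structure near $a_+$, and a rotation-based structure away from $a_+$ --- have no reason to agree on any overlap, and a rotation-based structure defined only away from $a_+$ has no reason to extend over $a_+$ at all, since the field of tangent planes to the fibers has no limit at the node. (Your pointer to the proofs of Theorems~\ref{PHE} and \ref{PHI} does not help here; those proofs use obstruction theory for monomorphisms, not this construction.) The paper closes this gap not by patching but by normalizing the auxiliary data so that a single definition works everywhere. Fix charts $(U,\phi)$ around $a_+$ and $(V,\psi)$ around $f(a_+)$ in which $f$ becomes the holomorphic model $(z_1,z_2)\mapsto z_1z_2$; choose the Riemannian metric on $S^4$ so that $\phi$ is an isometry near $a_+$ (Euclidean metric on $\C^2$); using contractibility of $\widetilde\Gamma(1)$, deform the complex structure $J$ on $S^2$ so that $\psi$ is holomorphic near $f(a_+)$; and on the orthogonal complement of each fiber tangent plane define $\tilde J$ not as an arbitrary positive extension but as the pullback of $J$ under $Tf$ (this choice is what makes $f$ itself pseudo-holomorphic, hence the foliation holomorphic). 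With these normalizations the rotation-plus-pullback definition coincides, on a punctured neighborhood of $a_+$, with the standard integrable structure of $\C^2$, because the standard structure rotates the tangent planes of the holomorphic fibers of $z_1z_2$ and preserves their Euclidean orthogonal complements, on which $Tf$ is complex linear. Hence the rotation-based $\tilde J$ extends smoothly over $a_+$, and smoothness, positivity, and pseudo-holomorphicity of every leaf follow at once. Without this (or an equivalent) matching argument, your proposal proves the first sentence of Corollary~\ref{pseudo/thm} but not the second.
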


For a closed, oriented 4-manifold $M$, we denote the signature of $M$ by $\sigma(M)$.

\begin{theorem}~\label{4-manifolds}
Suppose that $(M, J)$ is a closed almost complex $4$-manifold such that $H^2(M)$ has no $2$-torsion. 
Then, $(M, J)$ can be pseudo-holomorphically embedded in $(\R^6, \tilde{J})$, for some $\tilde{J}$, if and only if $\sigma(M) = \chi(M) = 0$ and $c_1(M, J) = 0$. 
\end{theorem}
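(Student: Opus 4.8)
The plan is to read off both implications from the complex normal bundle of a codimension-two pseudo-holomorphic embedding, the crucial point being that the triviality of this bundle is forced by the topology of $S^6$.

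\emph{Necessity.} Suppose $f\colon(M,J)\hookrightarrow(\R^6,\tilde J)$ is pseudo-holomorphic. Since $df\circ J=\tilde J\circ df$, the image $df(TM)$ is $\tilde J$-invariant; choosing a $\tilde J$-invariant complement equips the normal bundle $\nu$ with a complex line bundle structure and gives $f^*(T\R^6,\tilde J)\cong(TM,J)\oplus\nu$. As $\R^6$ is contractible the left-hand side is complex-trivial, so $(TM,J)\oplus\nu\cong\underline{\C}^3$ and $c(TM)\,c(\nu)=1$. I would then show that $\nu$ is trivial: view $M\subset\R^6\subset S^6$; as $M$ is closed, connected and oriented, Alexander duality gives $H_1(S^6\setminus M)\cong\Z$ generated by the meridian, which is exactly the fibre of the normal sphere bundle $S(\nu)\to M$. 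A generator of $H^1(S^6\setminus M)$ restricts to a class on $S(\nu)$ of fibre-degree one, so in the Gysin sequence $H^1(S(\nu))\xrightarrow{\pi_!}H^0(M)\xrightarrow{\cup e(\nu)}H^2(M)$ the unit lies in the image of $\pi_!$; hence $e(\nu)=0$ and $\nu\cong\underline{\C}$. Therefore $c(TM)=1$, i.e. $c_1(M,J)=0$ and $\chi(M)=\langle c_2,[M]\rangle=0$, and then $\sigma(M)=0$ follows from $\langle c_1^2,[M]\rangle=2\chi(M)+3\sigma(M)$.

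\emph{Sufficiency.} Conversely, assume $\sigma=\chi=0$ and $c_1(M,J)=0$. The complex bundle $(TM,J)$ then has $c_1=0$ and $c_2=\chi=0$, so by obstruction theory over the $4$-complex $M$ it is complex-trivial, and with $\nu=\underline{\C}$ one recovers the bundle data $(TM,J)\oplus\nu\cong\underline{\C}^3$. To realise it I would first produce a smooth embedding $g\colon M\hookrightarrow\R^6$; by the computation above its normal bundle is automatically trivial, so after identifying it with $\underline{\C}$ one defines a positive $\tilde J$ on a tubular neighbourhood of $g(M)$ from $J$ and this product structure, which renders $g$ pseudo-holomorphic there. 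It remains to extend $\tilde J$ over $\R^6$: the fibre of positive complex structures is $\Gamma(3)\simeq\CP^3$, whose only nonvanishing homotopy group below degree $7$ is $\pi_2=\Z$, so the sole obstruction lies in $H^2(M;\Z)$ and equals $c_1(M,J)+c_1(\nu)=0$; the extension therefore exists and $g$ becomes pseudo-holomorphic in $(\R^6,\tilde J)$.

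\emph{The main obstacle} is the existence of the smooth embedding $g$ in the \emph{borderline} codimension two. Parallelizability of $M$ (which holds because $c_1=0$ forces $M$ spin with $p_1=c_1^2-2c_2=0$, so $M$ is stably parallelizable, and $\chi=0$ upgrades this to parallelizability) yields at best an immersion $M\looparrowright\R^5\subset\R^6$ whose double-point locus is a surface, so the Whitney trick is unavailable and the double points do not cancel formally. The residual self-intersection obstruction is $2$-torsion, and this is precisely where the hypothesis that $H^2(M)$ has no $2$-torsion is used to eliminate it and conclude that $M$ embeds. Once the embedding is in hand, the Chern-class bookkeeping, the Gysin/Alexander-duality computation, and the extension of $\tilde J$ are all routine.
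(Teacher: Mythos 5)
Your necessity argument is correct, and it is in fact a little more self-contained than the paper's: the paper quotes Kirby for the triviality of the normal bundle of a codimension-two embedding (your Alexander duality/Gysin argument is the standard proof of that fact) and then invokes the Cappell--Shaneson embedding theorem to obtain $\sigma(M)=0$, whereas you deduce $\sigma(M)=0$ directly from $\langle c_1^2,[M]\rangle = 2\chi(M)+3\sigma(M)$. The sufficiency direction, however, has two genuine gaps, and they sit exactly at the two points the theorem turns on. The first is the existence of the smooth embedding $g\colon M\hookrightarrow\R^6$. This is precisely the theorem of Cappell--Shaneson (Theorem~\ref{CS}, see also Ruberman): a closed orientable smooth $4$-manifold embeds in $\R^6$ if and only if $w_2(M)=0$ and $\sigma(M)=0$. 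It is a deep surgery-theoretic result, not something obtainable by cancelling the double-point surface of an immersion $M\looparrowright\R^5\subset\R^6$, and --- crucially --- it requires no hypothesis on $2$-torsion in $H^2(M)$. Your claim that the no-$2$-torsion hypothesis is what lets one ``eliminate'' a residual self-intersection obstruction and conclude that $M$ embeds is wrong: that hypothesis plays no role whatsoever in the existence of the embedding.

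The second gap is where the hypothesis actually lives, and your proposal gets the key computation wrong by a factor of $2$. Extending $\tilde J$ from a tubular neighbourhood of $g(M)$ to all of $\R^6$ is equivalent to null-homotoping the map $g\colon M\to\widetilde\Gamma(3)$ determined by $J\oplus J'$, and the primary (and only) obstruction $\Omega(g)$ lies in $H^2(M;\pi_2(\Gamma(3)))\cong H^2(M)$, as you say. But $\Omega(g)$ is \emph{not} equal to $c_1(M,J)+c_1(\nu)$; the correct relation is $2\Omega(g)=c_1(M,J)+c_1(\nu)=c_1(M,J)$. The reason is that the inclusion $\iota\colon\Gamma(3)\to\BU(3)$, the fibre of $\BU(3)\to\BSO(6)$, induces multiplication by $2$ on $\pi_2$: the homotopy exact sequence gives $0\to\pi_2(\Gamma(3))\to\pi_2(\BU(3))\to\pi_2(\BSO(6))\to 0$, that is $0\to\Z\to\Z\to\Z_2\to 0$. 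Put differently, knowing that $(TM\oplus\nu, J\oplus J')$ is trivial as a complex vector bundle (which is what $c_1=0$, $c_2=0$ buys you, i.e.\ null-homotopy of $\iota\circ g$) does not imply that $g$ itself is null-homotopic, because a complex trivialization need not be compatible with the standard real framing of $(T\R^6)_{|M}$ coming from the embedding. Consequently $c_1(M,J)=0$ only forces $\Omega(g)$ to be a $2$-torsion class, and it is exactly here --- and only here --- that the assumption that $H^2(M)$ has no $2$-torsion is used to conclude $\Omega(g)=0$. This factor of $2$ is the content of the paper's key lemma ($2\Omega(g)=c_1(M,J)$); your identification of the obstruction skips it, which is why your sketch is then forced to invent a (spurious) role for the torsion hypothesis in the embedding step.
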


\begin{remark} Our proof of Theorem \ref{PHE} also shows the existence of an isometric and pseudo-holomorphic embedding of 
an almost Hermitian manifold $(M^{2m},J,g)$ into $(\R^{2q_m},\tilde{J},g_0)$, where $g_0$ is the flat standard metric, 
$\tilde{J}$ is a suitable almost complex structure compatible with $g_0$ and $q_m$ is a sufficiently large integer. 
Indeed, the proof of Theorem \ref{PHE} starts with the use of the well-known theorem of Whitney to embed $M$ into $\R^{4m + 2}$.
If we start our proof of Theorem \ref{PHE} with an isometric embedding $f$ of $(M,g)$ into $(\R^{2q_m},g_0)$ where $q_m$ is determined as in Nash's theorem \cite{Na56}, then our proof shows the existence of the suitable $\tilde{J}$ compatible with $g_0$ such that $f$ is also pseudo-holomorphic (for recent improvements of $q_m$ see \cite{HH06} and the references therein).
\end{remark}

The paper is organized as follows. In Section~\ref{prelim/sec} we address some preliminaries and fix notations. In Section~\ref{proofs/sec} we prove Theorems \ref{PHE} and \ref{PHI}. The proofs make use of Proposition~\ref{key}, which is proved therein. In Section~\ref{foliat/sec} we quickly recall basic facts about Lefschetz fibrations, and prove the second part of Corollary~\ref{pseudo/thm}. 
Section~\ref{dim4/sec} addresses the four-dimensional case, with the proof of Theorem~\ref{4-manifolds}. We conclude with some remarks in Section~\ref{fin-rmk/sec}, providing also the sketch of an alternative proof of Theorem~\ref{4-manifolds}.

\section{Preliminaries}\label{prelim/sec}
Throughout this paper, manifolds are assumed to be connected, oriented, and smooth, that is of class $C^\infty$. Maps between manifolds are also assumed to be smooth, if not differently stated.

Let $M$ be an even dimensional manifold.
An automorphism $J \colon TM \to TM$ satisfying $J^2=-\mathrm{id}$ is called an almost complex structure on $M$.
The pair $(M,J)$ is called an almost complex manifold.
Equivalently, an almost complex structure is a complex structure on the tangent bundle, where the multiplication by $i$ on $T_p M$ corresponds to the action of $J_p$. In particular, an almost complex structure determines a preferred orientation on $M$.
If $M$ is already oriented, $J$ is said to be positive if the given orientation coincides with the one determined by $J$. Otherwise, $J$ is said to be a negative almost complex structure.

The space $\widetilde\Gamma(n)$ of positive linear complex structure on $\R^{2n}$, that is the set of matrices which are conjugate to
$$J_n = \bigoplus_n \begin{bmatrix}
0 & -1\\
1 & 0
\end{bmatrix}$$
by an element of $\GL_+(\R^{2n})$, is diffeomorphic with the symmetric space $\GL_+(\R^{2n})/\GL(\C^n)$, which in turn is homotopy equivalent to $\Gamma(n)$. An almost complex structure on $\R^{2n}$ can be considered as a map $J \colon \R^{2n} \to \widetilde\Gamma(n)$.

\begin{definition}
Let $(M,J)$ and $(N,J')$ be almost complex manifolds.
A map $f \colon M\to N$ is said to be pseudo-holomorphic if $Tf\circ J=J'\circ Tf$. Equivalently, $f$ is pseudo-holomorphic if and only if the tangent map is complex linear at each point.
\end{definition}

Our results depend on well-known theorems of Whitney on the existence of immersions and embeddings of smooth manifolds into Euclidean spaces \cite{Wh44, Wh44b}. We also make use of
the Hirsch-Smale theory on the classification of immersions \cite{Hi59, Sm59}. We first quickly recall Whitney's theorems.

\begin{theorem}[Whitney \cite{Wh44}]~\label{embedding}
Any smooth $m$-manifold can be embedded in $\R^{2m}$. 
\end{theorem}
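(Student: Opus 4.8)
The plan is to produce the embedding in three stages: first embed $M$ in a Euclidean space of large dimension, then shrink the dimension down to $\R^{2m}$ by a sequence of generic projections while controlling the self-intersections, and finally remove the surviving double points by the Whitney trick. I would carry out the argument in detail for $M$ compact, where the self-intersection set is finite and an injective immersion is automatically an embedding; the general case follows by running the same scheme over a compact exhaustion, together with a properness argument to keep the assembled map an embedding.

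For the first stage I would cover $M$ by finitely many coordinate charts, multiply each chart map by a subordinate bump function, and assemble these products together with the partition functions themselves into a single smooth map $M \to \R^N$ with $N$ large; a direct check shows this map is an injective immersion, hence an embedding. For the second stage I would project generically. Post-composing the embedding with orthogonal projection $\R^N \to \R^{N-1}$ along a direction $v \in S^{N-1}$ keeps the map an immersion as long as $v$ avoids the image of the unit tangent directions, a set of dimension $2m - 1$, and keeps it injective as long as $v$ avoids the image of the secant map $(M \times M \setminus \Delta) \to S^{N-1}$, a set of dimension $2m$. By Sard's theorem both bad sets are avoidable whenever $N - 1 > 2m$, so iterating reaches an embedding in $\R^{2m+1}$. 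One further projection into $\R^{2m}$ can still be taken to preserve immersivity, since the tangent directions have dimension $2m - 1 < 2m$, but now the secant map has $2m$-dimensional domain equal to the target dimension, so injectivity must be sacrificed; by transversality a generic such projection yields an immersion $f \colon M \looparrowright \R^{2m}$ whose self-intersection is a finite set of transverse double points.

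The final and decisive stage is to cancel these double points, and this is where the real work lies. Each transverse double point carries a local sign, and I would first arrange, by modifying $f$ within the freedom available to an immersion into Euclidean space (equivalently, by a normalization that computes the algebraic self-intersection number in terms of the normal bundle), that this signed count vanishes, so the double points fall into pairs of opposite sign. For each such pair $p, q$ I would join their two preimages by arcs running along the two sheets of $M$ to form a loop, bound that loop by a Whitney disk, and — using that $\R^{2m}$ is simply connected and that $2m \geq 5$, so that by general position the disk may be taken embedded with interior disjoint from $f(M)$ — perform a Whitney move that pushes one sheet across the disk and cancels $p$ and $q$. Iterating removes all double points and produces the embedding.

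I expect the hard part to be precisely this last stage. It splits into two genuine difficulties: verifying that the algebraic self-intersection number can actually be killed, which is a characteristic-class computation (and is more delicate when $m$ is odd, where the natural invariant is only $\Z/2$-valued because interchanging the two sheets reverses the sign); and supplying the general-position input that makes the Whitney disk embedded with interior off $M$, which is what forces the dimension hypothesis $2m \geq 5$, i.e. $m \geq 3$. The low-dimensional cases $m = 1$ (one-manifolds in the plane) and $m = 2$ (surfaces in $\R^4$) lie outside the range of the Whitney trick and would be disposed of by separate, classical arguments.
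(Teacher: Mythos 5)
The paper does not actually prove this statement: it is imported verbatim from Whitney \cite{Wh44}, and the paper's only engagement with the proof is the discussion right after it, recalling the self-intersection number $I(f)$ of a normal-crossings immersion and the fact that $f$ is regularly homotopic to an embedding if and only if $I(f)=0$. Your outline (embed in high codimension, project generically to a normal-crossings immersion in $\R^{2m}$, cancel double points by the Whitney trick, with $m\le 2$ treated separately) is exactly the strategy of the cited classical proof, so the comparison is with Whitney's argument rather than with anything proved in the paper.

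There is, however, one genuine gap, and it sits precisely where you located the ``hard part'': the step that makes the signed count of double points vanish. The mechanism you propose --- ``modifying $f$ within the freedom available to an immersion,'' or ``a characteristic-class computation'' --- cannot accomplish this. The number $I(f)$ is a regular homotopy invariant (indeed $e(\nu_f)=-2I(f)[M]$, as the paper recalls), so no isotopy, regular homotopy, or normalization of a given immersion will change it, and no computation will make it zero if it isn't; moreover nothing in the general-position argument forces a generic projection to land in the regular homotopy class with $I=0$. What Whitney actually does, and what is missing from your sketch, is a local \emph{non-regular} modification: inside a small ball he replaces $f$ by a standard model immersion with a single transverse double point of either prescribed sign, agreeing with $f$ near the boundary of the ball (in dimension $m=1$ this is the familiar insertion of a kink in a plane curve, which necessarily passes through a cusp). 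Applying this move $|I(f)|$ times kills the algebraic count, and only then can the Whitney trick take over. Equivalently, in the Hirsch--Smale language the paper uses elsewhere, one must know that $I$ is \emph{surjective} onto $\Z$ (so that the class $I=0$ exists and can be realized); injectivity is the classification statement, but surjectivity is again proved by inserting kinks. With that move supplied, and with your other provisos (properness for noncompact $M$, mod-2 bookkeeping for $m$ odd or $M$ nonorientable, separate classical arguments for $m=1,2$), the outline is the correct classical proof.
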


\begin{theorem}[Whitney \cite{Wh44b}]
For $m>1$, any smooth $m$-manifold can be immersed in $\R^{2m-1}$. 
\end{theorem}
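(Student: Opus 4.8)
The plan is to reduce the existence of the immersion to a bundle-theoretic splitting problem by means of the Hirsch--Smale theory, and then to settle that problem by a single characteristic-class computation. First I would apply Whitney's embedding theorem (Theorem~\ref{embedding}) to embed $M^m$ in $\R^{2m}$, obtaining a normal bundle $\nu$ of rank $m$ with $TM\oplus\nu\cong\underline{\R}^{2m}$. Since $m>1$ we have $2m-1>m$, so the Hirsch--Smale $h$-principle is available: an immersion $M\looparrowright\R^{2m-1}$ exists if and only if there is a vector bundle monomorphism $TM\to\underline{\R}^{2m-1}$, that is, a rank-$(m-1)$ bundle $\xi$ with $TM\oplus\xi\cong\underline{\R}^{2m-1}$. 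Thus the problem becomes whether the stable normal bundle of $M$ admits a representative of rank $m-1$.

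The key reduction is that it suffices to split a trivial line off $\nu$. If $\nu$ carries a nowhere-zero section, then $\nu\cong\xi\oplus\underline{\R}$ with $\xi$ of rank $m-1$, so $TM\oplus\xi\oplus\underline{\R}\cong\underline{\R}^{2m}$; since $TM\oplus\xi$ is then stably trivial of rank $2m-1>m=\dim M$, it is in fact trivial, and the resulting isomorphism gives the desired monomorphism $TM\to\underline{\R}^{2m-1}$. I am therefore reduced to producing a nowhere-zero section of the rank-$m$ bundle $\nu$ over the $m$-manifold $M$. This is a purely top-dimensional obstruction problem, controlled by the (twisted) Euler class of $\nu$, whose mod-$2$ reduction is the top normal Stiefel--Whitney class: from $w(\nu)=w(TM)^{-1}=\bar w(M)$ one has $w_m(\nu)=\bar w_m(M)$.

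In the oriented setting assumed throughout this paper the conclusion is immediate. The number $\langle e(\nu),[M]\rangle$ is the self-intersection $[M]\cdot[M]$ of $M$ inside $\R^{2m}$, and it vanishes because $M$ is null-homologous there ($H_m(\R^{2m})=0$). As evaluation against $[M]$ identifies $H^m(M;\Z)$ with $\Z$ for a closed connected oriented $M$, this forces $e(\nu)=0$, the section exists, and the immersion follows. For the general, possibly non-orientable, statement the crux is the vanishing $\langle\bar w_m(M),[M]\rangle=0$, which I would extract from Wu's formula: writing $\bar w=\mathrm{Sq}(\bar v)$ with $\bar v$ the inverse of the total Wu class $v$, expanding $\bar w_m=\sum_i\mathrm{Sq}^i(\bar v_{m-i})$, and using the defining property $\langle\mathrm{Sq}^i x,[M]\rangle=\langle v_i\cup x,[M]\rangle$, one obtains $\langle\bar w_m(M),[M]\rangle=\langle(v\cdot\bar v)_m,[M]\rangle$, which is $0$ because $v\cdot\bar v=1$ has no component in the positive degree $m$.

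I expect the main obstacle to be exactly this characteristic-class vanishing together with the careful handling of the single obstruction in the non-orientable case, where one must adjust the chosen rank-$m$ representative within its stable class to annihilate the (necessarily even) Euler number; in the oriented case this difficulty disappears. The hypothesis $m>1$ enters precisely to guarantee $2m-1>m$, so that the Hirsch--Smale theory applies and the target dimension genuinely drops, in agreement with the failure at $m=1$, where $S^1$ does not immerse in $\R^1$.
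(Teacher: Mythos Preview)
The paper does not give its own proof of this theorem; it is quoted as a classical result of Whitney with the citation \cite{Wh44b} and used as background. So there is nothing in the paper to compare your argument against line by line.

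That said, your argument is the standard modern route (essentially Hirsch's reproof of Whitney's immersion theorem via the $h$-principle), and it is correct in the oriented setting the paper works in. A couple of remarks. First, there is a mild anachronism: you invoke Theorem~\ref{SH} (Hirsch--Smale, 1959) to prove a 1944 theorem of Whitney. Mathematically this is fine, but Whitney's original proof in \cite{Wh44b} is a direct geometric construction that removes the singularities of a generic map $M^m\to\R^{2m-1}$ and does not pass through bundle theory; your approach is logically independent of his. Second, your oriented case is clean: $e(\nu)=0$ from $[M]\cdot[M]=0$ in $\R^{2m}$, then cancellation of the trivial line uses exactly $2m-1>m$, i.e.\ $m>1$. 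For the non-orientable case (which the paper's standing hypotheses exclude anyway) your outline is the right one --- Massey's identity $\langle\bar w_m(M),[M]\rangle=0$ via Wu's formula --- but the sentence about ``adjusting the rank-$m$ representative within its stable class to annihilate the (necessarily even) Euler number'' is where all the remaining work hides; as written it is a gesture toward the argument rather than the argument itself. For the purposes of this paper, the oriented statement is all that is needed, and your sketch for that is complete.
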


A key ingredient in the proof of Theorem~\ref{embedding} is the self-intersection number $I(f)$ of an immersion $f \colon M^{m}\to \R^{2m}$ with normal crossings, 
that is an immersion $f$ whose self-intersections are all transverse double points.

When $m$ is even and $M$ is orientable, $I(f)$ is the algebraic self-intersection of $f$ (whereas 
if $m$ is odd or $M$ is non-orientable, $I(f)$ is defined only mod 2). 
By results of Whitney for $m \geq 3$ and Hirsch (based on work of Smale) for $m = 2$, two immersions $f$ and $g$ of $M^{m}$ into $\R^{2m}$ are regularly homotopic if and only if $I(f) = I(g)$ and moreover any $f$ is regularly homotopic to an immersion with exactly $|I(f)|$ singular points. In particular, $f$ is regularly homotopic to an embedding if and only if $I(f) = 0$.

Our proof of Theorem~\ref{PHI} is based on the following theorem of Hirsch \cite{Hi59} and Smale \cite{Sm59}. We denote by $\mathrm{Imm}(M,\allowbreak N)$ the space of immersions of $M$ into $N$, and by $\mathrm{Mon}(TM,\allowbreak TN)$ the space of bundle monomorphisms from $TM$ to $TN$, endowed with the $C^{\infty}$-topology, with $M^m$ and $N^n$ manifolds of dimensions $m < n$.

\begin{theorem}\label{SH}
The tangent map $T \colon \mathrm{Imm}(M, N)\to \mathrm{Mon}(TM, TN)$ is a weak homotopy equivalence. 
In particular, the tangent map induces a bijection $\pi_0(\mathrm{Imm}(M, N)) \cong \pi_0(\mathrm{Mon}(TM, TN))$. 
\end{theorem}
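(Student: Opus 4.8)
The plan is to recognize $T$ as the derivative ($1$-jet) map attached to the first-order differential relation $\mathcal R \subset J^1(M,N)$ of immersions, and to deduce the statement from the full parametric, relative $h$-principle for $\mathcal R$; the weak homotopy equivalence of $T$ is exactly the parametric form of that $h$-principle. First I would identify the target intrinsically: a bundle monomorphism $F \in \mathrm{Mon}(TM, TN)$ covers a continuous map $f\colon M \to N$ and is fiberwise an injective linear map $T_pM \to T_{f(p)}N$, which is precisely a section of $\mathcal R$, i.e. a \emph{formal immersion}. The map $T$ sends a genuine immersion to its $1$-jet, so proving that $T$ is a weak homotopy equivalence amounts to showing that every formal immersion can be deformed through formal immersions to a holonomic one, that this holds in compact families, and that it can be done rel a closed set on which the formal datum is already holonomic.

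The heart of the matter is that, under the hypothesis $m = \dim M < \dim N = n$, the immersion relation is \emph{ample} in the sense of convex integration. Working locally with $N = \R^n$ and coordinates on $M$, a $1$-jet records $f$ together with the partials $\partial_1 f, \dots, \partial_m f \in \R^n$, and immersivity is their linear independence. Freezing all partials but $\partial_m f$ (with the frozen ones independent), the forbidden set of velocity vectors is their span, an affine subspace of dimension at most $m-1$; since $n \ge m+1$ this forbidden set has codimension at least $2$, so its complement is connected with convex hull the whole fiber. This is exactly the ampleness condition, and it is here that the codimension hypothesis is essential: for $n = m$ the complement of a hyperplane splits into two half-spaces and ampleness fails, which is why the equidimensional case is excluded.

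Granting ampleness, I would invoke Gromov's convex integration theorem for open ample relations, which yields the parametric $h$-principle rel a subcomplex: any compact family of formal immersions that is holonomic over a neighborhood of a closed set can be deformed, rel that set, to a family of holonomic sections. Applying this with parameter pairs $(D^k, \partial D^k)$ shows that $T_*\colon \pi_k(\mathrm{Imm}(M,N)) \to \pi_k(\mathrm{Mon}(TM,TN))$ is surjective (existence) and injective (uniqueness up to homotopy) for every $k$ and every basepoint, which is the definition of a weak homotopy equivalence; the case $k=0$ gives the asserted bijection on $\pi_0$.

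Alternatively, and closer to the cited work of Hirsch and Smale, one can bypass convex integration by an induction over a handle decomposition of $M$: using the diffeomorphism-invariance of $\mathcal R$ one reduces to extending an immersion across a single handle $D^i \times D^{m-i}$ rel its attaching region, and the core local lemma --- that formal immersions of a disk which are holonomic near the boundary can be deformed rel boundary to genuine ones --- follows from Smale's analysis of immersions of spheres, whose crux is once more the connectivity of the Stiefel manifold $V_m(\R^n)$ of $m$-frames, which is nonempty and connected precisely because $m < n$. In either route the main obstacle is the same: the local, relative step that upgrades formal first-order data to an actual immersion, and it is exactly the positive-codimension hypothesis that makes this step possible.
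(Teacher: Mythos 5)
The paper does not prove this statement at all: Theorem~\ref{SH} is imported as a known result, with the proof deferred to Hirsch \cite{Hi59} and Smale \cite{Sm59}; so any argument you give is necessarily a different route, and yours is a correct one. Your identification of $\mathrm{Mon}(TM,TN)$ with the space of sections (formal solutions) of the open immersion relation $\mathcal{R}\subset J^1(M,N)$ is right, and the ampleness computation is the standard one: with $\partial_1 f,\dots,\partial_{m-1}f$ frozen and independent, the forbidden values of $\partial_m f$ form a linear subspace of dimension at most $m-1$, hence of codimension at least $2$ in $\R^n$ precisely because $n\geq m+1$, so the complement is connected with full convex hull; you should add explicitly that when the frozen partials are dependent the principal intersection is empty, which is vacuously ample. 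Gromov's parametric, relative h-principle for open ample relations then yields the weak homotopy equivalence by the $(D^{k+1},S^k)$ bookkeeping you indicate, and it would be worth one sentence noting that smooth and continuous sections of $\mathcal{R}$ have the same weak homotopy type, since the paper topologizes $\mathrm{Mon}$ with the $C^\infty$-topology. What your primary route buys is brevity and conceptual uniformity, at the price of invoking machinery (convex integration) that is far more general than, and historically posterior to, the theorem itself; your alternative route --- handle induction resting on Smale's relative lemma for disks and the connectivity of the Stiefel manifold $V_m(\R^n)$ --- is essentially the argument of the cited papers, i.e.\ the proof that the paper's citation tacitly invokes. In both routes you correctly isolate the positive-codimension hypothesis $m<n$ as the crux, which matches the paper's standing assumption and explains why the equidimensional case is excluded.
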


In other words, the classification of immersions up to regular homotopy is reduced to that of monomorphisms up to homotopy. 

For $N = \R^n$, homotopy classes of bundle monomorphisms $TM \to T\R^n$ correspond to homotopy classes of maps $TM \to \R^n$ which are linear and injective on each fiber, and in turn these are sections of a bundle over $M$ with fiber diffeomorphic to the Stiefel manifold $V_m(\R^n)$ of linear injective maps $\R^m \to \R^n$, that is the space of $n \times m$ matrices of rank $m$.
Notice that $V_m(\R^n)$ is homotopy equivalent to the homogeneous space $\SO(n)/\SO(n-m)$.
   
Now we focus on the immersions of a closed, oriented, connected manifold $M^{2m}$ into $\R^{4m}$. 
By obstruction theory, the obstructions to homotopy between two sections of a $V_{2m}(\R^{4m})$-bundle lie in the cohomology groups $H^i(M, \pi_{i}(V_{2m}(\R^{4m})))$. 
Since $V_{2m}(\R^{4m})$ is $(2m-1)$-connected, the only obstruction lies in 
$$H^{2m}(M; \pi_{2m}(V_{2m}(\R^{4m})))\cong \pi_{2m}(V_{2m}(\R^{4m})) \cong \Z .$$
Hence, there are identifications $$\pi _0(\mathrm{Imm}(M, \R^{4m})) \cong \pi_0(\mathrm{Mon}(TM, T\R^{4m}))\cong \pi_{2m}(V_{2m}(\R^{4m})) \cong \Z.$$ This is given exactly by the self-intersection $I(f)$ for immersions with only normal crossings.

Let $\nu _f$ be the normal bundle of the immersion $f$. 
It is well known that the normal Euler class $e(\nu _f)$ is given by $-2I(f) [M]$ (see \cite{Wh44}, \cite{LS59}). 
Hence, the normal Euler class also classifies regular homotopy classes of immersions of $M^{2m}$ into $\R^{4m}$. 
We use this fact in the proof of Theorem~\ref{PHI}. 

\section{The proofs of theorems \ref{PHE} and \ref{PHI}}\label{proofs/sec}

First, we prove the following proposition. Let $J_1$ be the standard complex structure on $\C^n$. 

\begin{proposition}~\label{key}
$(M^{2m},J)$ can be pseudo-holomorphically embedded in $(\R^{2n}, \tilde{J})$ for a suitable positive almost complex structure $\tilde{J}$,
if and only if there is an embedding $f \colon M \to \R^{2n}$ such that the tangent map $Tf \colon TM \to T\R^{2n}$ is homotopic to a complex linear monomorphism (with respect to $J$ and $J_1$) through a homotopy of monomorphisms covering $f$.
\end{proposition}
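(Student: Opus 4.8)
The statement is an equivalence, and my plan is to prove both implications by the same mechanism: reduce each to extending a section of a fibre bundle over $M\times[0,1]$ across a deformation retract. The topological fact I would isolate first is this: if $\pi\colon E\to B$ is a (Hurewicz) fibration and $A\subseteq B$ is a strong deformation retract with $A\hookrightarrow B$ a cofibration, then every section of $\pi$ over $A$ extends to a section over all of $B$. This follows by lifting the deformation retraction through $\pi$ via the homotopy lifting property, and notably requires no connectivity hypothesis on the fibre. I will apply it with $B=M\times[0,1]$ and $A$ one of the two ends; recall that being pseudo-holomorphic means precisely that the tangent map is fibrewise complex linear, and an almost complex structure on $\R^{2n}$ is a map $\R^{2n}\to\widetilde\Gamma(n)$ into the (connected) space of positive linear complex structures.

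For the forward direction, suppose $f\colon(M,J)\to(\R^{2n},\tilde J)$ is a pseudo-holomorphic embedding, so that $Tf$ is complex linear with respect to $J$ and $\tilde J\circ f$. Since $\R^{2n}$ is contractible, $\tilde J$ is homotopic to the constant map $J_1$; restricting such a homotopy along $f$ gives a path $\tilde J_t\colon M\to\widetilde\Gamma(n)$ with $\tilde J_0=\tilde J\circ f$ and $\tilde J_1=J_1$. I would then form the bundle over $M\times[0,1]$ whose fibre over $(p,t)$ is the space of monomorphisms $T_pM\to\R^{2n}$ that are complex linear with respect to $(J_p,\tilde J_t(p))$, a complex Stiefel manifold and in particular non-empty. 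The assignment $p\mapsto Tf(p)$ is a section over the end $M\times\{0\}$; extending it over $M\times[0,1]$ by the fact above produces at $t=1$ a $(J,J_1)$-complex-linear monomorphism $L$, and the section itself is the required homotopy of monomorphisms covering $f$ from $Tf$ to $L$.

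The backward direction is the substantive half. Let $G_t\colon TM\to f^*T\R^{2n}$ be a homotopy of monomorphisms covering $f$ with $G_0=Tf$ and $G_1=L$ complex linear with respect to $(J,J_1)$. Since being pseudo-holomorphic constrains $\tilde J$ only along $f(M)$, I would first build $\tilde J$ there, by considering the bundle over $M\times[0,1]$ whose fibre over $(p,t)$ is the space of positive linear complex structures $\mathcal J$ on $\R^{2n}$ making $G_t(p)$ complex linear, namely with $\mathcal J|_{\Im G_t(p)}=G_t(p)\,J_p\,G_t(p)^{-1}$. Such $\mathcal J$ always exist — extend the induced structure from the $2m$-plane $\Im G_t(p)$, choosing a complex structure on a complement so as to recover the standard orientation — so the fibres are non-empty and the bundle is locally trivial once one adapts a local frame to the moving plane $\Im G_t(p)$. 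At the end $M\times\{1\}$ the constant value $J_1$ is a section; extending it over $M\times[0,1]$ yields at $t=0$ a family $\mathcal J_0\colon M\to\widetilde\Gamma(n)$ making $Tf$ complex linear, which I declare to be $\tilde J$ along $f(M)$. The main obstacle is then the \emph{global} extension of $\tilde J$ from $f(M)$ to all of $\R^{2n}$ as a positive structure, and here the hypothesis pays off a second time: the section just produced is itself a homotopy from $\mathcal J_0$ to the constant $J_1$, so $\tilde J|_{f(M)}\colon f(M)\to\widetilde\Gamma(n)$ is null-homotopic. As $M$ is compact, $f(M)\hookrightarrow\R^{2n}$ is a cofibration, and a null-homotopic map on a cofibred subspace is homotopic to one that extends trivially (the constant), hence extends itself by the homotopy extension property and stays valued in $\widetilde\Gamma(n)$. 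This produces a positive almost complex structure $\tilde J$ on $\R^{2n}$ for which $Tf$ is complex linear along $f(M)$, i.e. $f$ is pseudo-holomorphic; I would close by remarking that all the constructions can be carried out smoothly.
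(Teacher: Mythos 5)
Your proof is correct, and its overall strategy coincides with the paper's: in both directions one uses the contractibility of $\R^{2n}$ to connect $\tilde J$ with the constant structure $J_1$, transports data along that homotopy by a homotopy-lifting argument, and (for the ``if'' part) extends a null-homotopic map $f(M)\to\widetilde\Gamma(n)$ over all of $\R^{2n}$. What differs is the mechanism. You isolate a single lemma---sections of a fibration over a strong deformation retract extend, with no connectivity assumption on the fibres---and run both implications through it via bundles over $M\times[0,1]$. The paper instead makes two concrete constructions: for ``only if'' it lifts the homotopy $(J_t)$ through the fibration $\GL_+(\R^{2n})\to\widetilde\Gamma(n)$, obtaining a moving frame and explicit $\C$-linear isomorphisms $\psi_t\colon(T\R^{2n},\tilde J)\to(T\R^{2n},J_t)$, and sets $F_t=\psi_t\circ Tf$; for ``if'' it endows the normal bundle $\nu_1$ (the orthogonal complement of the complex subbundle $F_1(TM)\subset T\C^n$) with its induced complex structure $J'$, pulls $J'$ back to $\nu_0$ through the homotopy of normal bundles, and extends the resulting null-homotopic $J\oplus J'$. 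Your backward direction thus avoids metrics and normal bundles by parametrizing all compatible positive ambient structures as a bundle and pushing the constant section $J_1$ backwards from $t=1$; this is tidy, but note that the local triviality you assert (``adapt a local frame to the moving plane'') amounts to choosing local sections of $\GL_+(\R^{2n})\to\widetilde\Gamma(n)$, the very fibration the paper lifts through, so the underlying ingredient is the same, only repackaged. The paper's more explicit formulation also has a side benefit: the complex structure $J'$ on the normal bundle is exactly what the proof of Theorem~\ref{PHI} manipulates later (matching structures at the double points), so its version of the ``if'' part is set up for reuse. Two details worth tightening in yours: the orientation trick proving the fibres non-empty requires $n>m$ (when $n=m$ there is no complement; the fibre is then a single point, and it is positive because positivity persists along the path of isomorphisms $G_t(p)$ starting from $L(p)$), and the final extension should be smoothed rel $f(M)$, as you remark at the end.
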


\begin{proof} We begin with the `only if' part. Take a pseudo-holomorphic embedding $f \colon (M,\allowbreak J) \to (\R^{2n}, \tilde J)$. 
We can assume that $\tilde {J}$ is equal to $J_1$ outside of a sufficiently large ball $B^{2n}(R)$ of radius $R > 0$ and centered at the origin. We take $R$ such that $f(M) \subset B^{2n}(R/2)$.

Since $\tilde {J}$ and $J_1$ are both positive almost complex structures on the contractible space $\R^{2n}$, there exists a homotopy $(J_t)_{t \in [0,1]}$ of almost complex structures between $J_0=\tilde{J}$ and $J_1$ whose support is in $B^{2n}(R)$. 

In other words, we have a smooth map $g:B^{2n}(R)\times [0,1]\to \widetilde{\Gamma} (n)$, such that $g_t=g(\cdot, t) = J_t$ for all $t \in [0,1]$. Since $g_1$ is a constant map, we have the trivial lift 
$\bar{g_1}: B^{2n}(R)\to \GL_+(\R^{2n})$, which is the constant map to the identity element. 
Hence, by the homotopy lifting property, there exists a lift $$\bar{g}:B^{2n}(R)\times [0,1]\to \GL_+(\R^{2n})$$ of $g$ with respect to the projection $\GL_+(\R^{2n})\to \widetilde{\Gamma} (n)$. 
By taking the columns of the corresponding matrix, we obtain vector fields $\{e_{1}(t), e_2(t), \dots, e_{2n-1}(t), e_{2n}(t)\}$ on $\R^{2n}$, which depend smoothly on $t \in [0, 1]$, span pointwise the tangent bundle $T\R^{2n}$, and satisfy 
$$J_t(e_{2k-1}(t))=e_{2k}(t), \; e_{2k-1}(1)=\frac{\partial}{\partial x_k},\; e_{2k}(1)=\frac{\partial}{\partial y_k}\; (k=1,\dots, n),$$
where we denote by $(x_1, y_1,\dots, x_n, y_n)$ the cartesian coordinates of $\R^{2n}$.

Now, we have a homotopy $\psi_t \colon (T\R^{2n}, J_0) \to (T\R^{2n}, J_t)$ of $\C$-linear bundle isomorphisms over the identity map $\R^{2n}\to \R^{2n}$ such that $\psi_t(e_{i}(0)) = e_{i}(t)$ for all $i = 1, \dots, 2n$. Then, $F_t = \psi_t \circ Tf \colon TM \to T\R^{2n}$ is a homotopy of monomorphisms covering $f$ such that $F_0 = Tf$ and $F_1$ is complex linear with respect to the standard structure $J_1$. \\

Next, we prove the `if' part.
Let $(F_t \colon TM \to T\R^{2n})_{t\in [0,1]}$ be a homotopy of monomorphisms covering $f$ between $F_0 = Tf$ and a $\C$-linear monomorphism $F_1 \colon (TM, J) \to (T\R^{2n}, J_1)$. Let $\nu_t$ be the normal bundle of $F_t$.

The fiber of $\nu_1$ over $p \in M$ is the orthogonal complement of $F_1(T_p M)$ in $T_{f(p)}\C^n$, which is a complex linear subspace. Therefore, $\nu_1$ admits a complex structure $J'$. Since we have a homotopy $(\nu_t)_{t\in[0,1]}$ of normal bundles, we can pull back $J'$ to $\nu _0$, the normal bundle of $f$.  
Thus, we get a complex structure $J'' = J \oplus J'$ on the trivial bundle $(T\R^{2n})_{|M}$, which is a map $J'' \colon f(M)\to \widetilde\Gamma(n)$.
By construction, $J''$ is null-homotopic.
Hence, there is an extension $\tilde J \colon \R^{2n}\to \widetilde\Gamma(n)$ of $J''$, and this concludes the proof.
\end{proof}

Now, we are ready to prove Theorem~\ref{PHE}.

\begin{proof}[Proof of Theorem~\ref{PHE}]
We take an embedding $f \colon M^{2m}\to \R^{4m+2}$.
By Proposition~\ref{key}, it is enough to show
the existence of a complex linear monomorphism which is homotopic to the tangent map $Tf$ via a homotopy of monomorphisms covering $f$.
A complex linear monomorphism covering $f$ corresponds to a section of a $V_m(\C^{2m+1})$-bundle over $M$, where we denote by $V_m(\C^n) \simeq \mathrm{U}(n)/\mathrm{U}(n-m)$ the complex Stiefel manifold of complex linear injective maps $\C^m \to \C^n$. Notice that the identification $\C^n \cong \R^{2n}$ induces a canonical inclusion $V_m(\C^n) \subset V_{2m}(\R^{2n})$.

Since $V_m(\C^{2m+1})$ is $(2m+2)$-connected, a section uniquely exists up to homotopy.
Since $V_{2m}(\R^{4m+2})$ is $(2m+1)$-connected, the homotopy type of monomorphisms covering $f$ is also unique.
Hence, a pseudo-holomorphic monomorphism exists and it is homotopic to the tangent map $Tf$ via a homotopy of monomorphisms covering $f$.
\end{proof}

\begin{proof}[Proof of Theorem~\ref{PHI}]
We begin with the proof of the `only if' part. Let $f \colon (M^{2m}, J) \to (\R^{4m}, \tilde J)$ be a self-transverse pseudo-holomorphic immersion. Then the complex bundle $TM \oplus \nu_f$ is trivial. This implies that the total Chern class satisfies $c(M, J)\, c(\nu_f) = 1$. The Euler class is given by $e(\nu_f) = c_m(\nu_f)$ and coincides with the $2m$-dimensional term of $c(M, J)^{-1} = s(M, J)$. By Whitney's formula stated at the end of previous Section, $\langle -e(\nu_{f}), [M] \rangle = 2 I(f)$, which in our complex case must be non-negative. Hence, $I(M, J) \geq 0$. Moreover, if $f$ is an embedding, we get $I(M, J) = 0$.

Next prove the `if' part.
Since $V_m(\C^{2m})$ is $2m$-connected, there is a complex linear monomorphism $F \colon T M \to T\C^{2m}$.
By Theorem~\ref{SH} there is an immersion $f' \colon M \to \R^{4m}$, with normal crossings, such that $Tf'$ is homotopic to $F$ as a real monomorphism.

In particular, the normal bundle $\nu_{f'}$ admits a complex structure $J'$.
Notice that the Whitney sum $TM \oplus \nu _{f'}$ is a trivial complex vector bundle because, by construction, it is equivalent to the pullback ${(f')^*}(T\C^{2m})$. Therefore, we have $c(\nu_{f'}) = c(M, J)^{-1}$.

It follows that $-2 I(M, J) = \langle e(\nu_{f'}), [M] \rangle$. Hence, $I(M, J)$ is the algebraic self-in\-ter\-sec\-tion of $f'$ \cite{Wh44}. This implies that $f'$ is regularly homotopic to an immersion $f \colon M \to \R^{4m}$ with exactly $I(M, J)$ double points, which are necessarily all positive.
Notice that the normal bundle $\nu_f$, being isomorphic to $\nu_{f'}$,  inherits a complex structure which we still denote by $J'$.

\medskip

If $I(M, J) = 0$, $f$ is an embedding and the complex bundle $TM \oplus \nu_f$ over $M$ gives a map $g \colon f(M) \to \widetilde\Gamma(2m)$ which is homotopic to a constant, since $Tf \colon TM \to T\R^{4m}$ is homotopic to a complex linear monomorphism by a homotopy of linear monomorphisms. Therefore, $g$ can be extended to an almost complex structure $\tilde J$ on $\R^{4m}$ such that $f \colon (M, J) \to (\R^{4m}, \tilde J)$ is a pseudo-holomorphic embedding.

\medskip

Next consider the case $I(M, J) > 0$.
First, isotope $f$ so that the two tangent spaces at the self-intersection points of $f(M)$ are orthogonal.
We regard $(T_x M)^\perp$ as a linear subspace of $T_{f(x)} \R^{4m}$, endowed with the complex structure $J'_x$. By means of the linear monomorphism $T_x f \colon T_x M \to T_{f(x)} \R^{4m}$, we can also identify $T_x M$ with a linear subspace of $T_{f(x)} \R^{4m}$, for all $x \in M$.

Let $p, q \in M$ be two distinct points such that $f(p) = f(q)=r$. 
By the connectedness of $\widetilde \Gamma(m)$, we can homotope $J'$ in a neighborhood of $p$ and $q$ so that $J'_p = J_q$ and $J'_q=J_p$ (up to the above identification). Indeed, this can be achieved by changing $J'$ inside disjoint balls around such points $p$ and $q$.
 
After performing this homotopy on all such pairs of points, we get an almost complex structure $J \oplus J'$ on $\R^{4m}$ which is well-defined along the immersed manifold $f(M)$, that is a map $g \colon f(M) \to \widetilde \Gamma(2m)$. 

The composition $g\circ f: M\to \widetilde \Gamma(2m)$ is null-homotopic because, by construction, $(\nu_f, J')$ is homotopic to $\nu_{F}$, and so $g \circ f$ is homotopic to the pullback of $T\C^{2m}$ by $F$, which is a constant map.

The space $f(M)$ is obtained from $M$ by identifying finitely many pairs of points. So, $f(M)$ is homotopy equivalent to the wedge sum of $M$ with $n$ copies of $S^1$, that is $f(M) \simeq M \vee (\vee_n S^1)$, where $n$ is the number of the double points of $f$. In order to show that $g$ is homotopic to a constant, we can assume that $g$ is defined on this wedge sum (up to composing $g$ with a suitable homotopy equivalence).

Since $\widetilde\Gamma(2m)$ is simply-connected, the map $g$ can be homotoped to a constant on the circles $S^1$ in the wedge sum. So, $g$ factorizes by a map $g' \colon M \to \widetilde \Gamma(2m)$ which is homotopic to $g \circ f$, hence to a constant. Therefore, $g$ is homotopic to a constant. This means that $g$ can be extended to a map $\tilde J \colon \R^{4m} \to \widetilde \Gamma(2m)$, which is the desired almost complex structure.
\end{proof}

\begin{remark}
In the case where $M^{2m}$ is open,
any almost complex manifold $(M^{2m},J)$ can be pseudo-holomorphically embedded
in $(\R^{4m}, \tilde{J})$ for a suitable positive almost complex structure $\tilde{J}$.
Since the open manifold $M^{2m}$ is isotopic to a neighborhood of the $(2m-1)$-skeleton and $V_{2m}(\R^{4m})$ is $(2m-1)$-connected,
the space $\mathrm{Mon}(TM^{2m},T\R^{4m})$ is path-connected.
\end{remark}

\section{A pseudo-holomorphic foliation of $(\R^4, \tilde J)$ and the proof of Corollary~\ref{pseudo/thm}}\label{foliat/sec}

We first recall the notion of Lefschetz fibration. For further details and general facts about Lefschetz fibrations, see for example \cite{GS99} or \cite[Section 6]{APZ2013}.

Let $M$ be a closed 4-manifold, and let $S$ be a closed surface.
A Lefschetz fibration on $M$ is a map $f \colon M \to S$ such that at the critical points, $f$ is locally equivalent to the complex non-degenerate quadratic form $(z_1, z_2) \mapsto z_1 z_2$ (positive critical point), or to $(z_1, z_2) \mapsto z_1 \bar z_2$ (negative critical point), with respect to suitably chosen local complex coordinates that are compatible with the given orientations. It follows that a Lefschetz fibration is an open map. We assume that $f$ is injective on the critical set, which is a finite set.

Away from the critical image $\crit(f) \subset S$, $f$ is a surface bundle over $S - \crit(f)$, with fiber a closed, oriented surface $F$ (the regular fiber of $f$). If $g$ is the genus of $F$, we say that $f$ is a genus-$g$ Lefschetz fibration.

A singular fiber $F_a = f^{-1}(a)$, $a \in \crit(f)$, is an immersed surface with one node singularity. Notice that this node can be positive or negative, accordingly with its sign as a critical point of $f$. We denote by $\crit_+(f) \subset S$ the set of positive critical values of $f$, and by $\crit_-(f) \subset S$ the set of negative critical values. Hence, we have $\crit(f) = \crit_+(f) \sqcup \crit_-(f)$.
By looking at the local model, one can show that there is a simple curve $c_a \subset F$, which is called a vanishing cycle, such that $F_a \cong F / c_a$. The monodromy of a small loop around $a$ in $S$ is a Dehn twist about the curve $c_a$. The vanishing cycles are not uniquely determined, depending on the choice of generators for $\pi_1(S - \crit(f))$.

Given a Lefschetz fibration $f \colon M \to S$, it is a known fact that there are almost complex structures $\tilde J$ on $M_+ = M - \crit_-(f)$ and $J$ on $S$ such that $f_| \colon (M_+, \tilde J) \to (S, J)$ is pseudo-holomorphic. In particular the fibers of $f_{|M_+}$ are pseudo-holomorphic (possibly immersed) curves, which define a pseudo-holomorphic singular foliation.
For the sake of completeness, we give the construction of such almost complex structures.

For any positive critical point $a$ of $f$, take a local complex chart $(U_a, \phi_a)$ around $a$ and a local complex chart $(V_a, \psi_a)$ around $f(a)$ such that $(\psi_a \circ f\circ \phi^{-1}_a)(z_1, z_2) = z_1 z_2$. Moreover, we assume that $V_a = f(U_a)$ and $\Cl(V_a) \cap \Cl(V_{a'}) = \emptyset$ for all $a \neq a'$. Take also a smaller compact neighborhood $U'_a \subset U_a$ of $a$.

Next, we endow $M$ with a Riemannian metric $g$ such that $\phi_a$ is an isometry in a neighborhood of $U'_a$, for all $a \in \crit_+(f)$, where $\C^2$ is endowed with the Euclidean metric.

Also, endow $S$ with a complex structure $J$.
Since the space $\widetilde \Gamma(1)$ is contractible, up to deforming $J$ in a neighborhood of $f(U'_a)$, we can assume that $\psi_a \colon (\Int(f(U'_a)), J) \to \C$ is complex analytic for all $a$.

Next, define an almost complex structure $\tilde J$ on $M_+ - \crit_+(f)$ such that $\tilde J$ is a $90^\circ$-rotation in the counterclockwise direction (with respect the the given orientation) on any tangent plane $T_p F_p$ to a fiber $F_p = f^{-1}(f(p))$ for $p \in M_+ - \crit_+(f)$, while on its orthogonal complement $(T_p F_p)^\perp$, $\tilde J$ is the pullback of $J$ through the isomorphism $(T_p f)_| \colon (T_p F_p)^\perp \to T_{f(p)} S$.

It follows that  $\tilde J$ coincides in $U'_a - \{a\}$ with the integrable complex structure determined by the local chart $\phi_a$, hence $\tilde J$ extends over $\crit_+(f)$.

By construction, $f \colon (M_+, \tilde J) \to (S, J)$ is pseudo-holomorphic, hence the fibers are pseudo-holomorphic curves.

\paragraph{Matsumoto's fibration on $S^4$}
In \cite{Mat82} Matsumoto constructed a genus-1 Lefschetz fibration $f \colon S^4 \to S^2$ with two critical points $\{a_+, a_-\}$ of opposite signs (see also \cite[Example 8.4.7]{GS99} for a description in terms of Kirby diagrams).

We sketch this construction here. We begin with the Hopf fibration $h \colon S^3 \to S^2$, which is the projectivization $\C^2 - \{0\} \to \CP^1 \cong S^2$ restricted to $S^3 \subset \C^2$, which is defined by the equation $|z_1|^2 + |z_2|^2 = 1$. Then make the suspension $\varSigma h \colon S^4 \to S^3$. The map $f$ can be described topologically as a perturbation of $f_1 = h \circ \varSigma h$. It is worth noting that both $f_1$ and $f$ are nice maps that represent the generator of $\pi_4(S^2) \cong \Z_2$.

The construction of $f$ goes as follows.
Recall that the Hopf fibration is given by $$h(z_1, z_2) = (2 z_1 \bar z_2, |z_1|^2 - |z_2|^2),$$ where we consider $S^2 \subset \C \times \R$ to be defined by the equation $|z|^2 + x^2 = 1$.

The suspension can be represented by $$\varSigma h(z_1, z_2, x) = (2 z_1 \bar z_2, |z_1|^2 - |z_2|^2 + i x \sqrt{2 - x^2}),$$ where $S^4 \subset \C^2 \times \R$ has equation $|z_1|^2 + |z_2|^2 + x^2 = 1$.

After some straightforward simplifications, we get that $f_1 = h \circ \varSigma h$ can be expressed by the formula 
$$f_1(z_1, z_2, x) = (4 z_1 \bar z_2 (|z_1|^2 - |z_2|^2 - i x \sqrt{2 - x^2}), 8 |z_1|^2 |z_2|^2 - 1).$$

It can be proved that $f_1$ is a genus-1 Lefschetz fibration with two critical points $a_\pm = (0, 0, \pm 1) \in S^4$ of opposite signs. The problem is that $f_1(a_+) = f_1(a_-)$, so the singular fiber is a sphere with two opposite nodes. This is due to the fact that $\varSigma h(a_+) = (0, i)$ and $\varSigma h(a_-) = (0, -i)$ belong to the same fiber of $h$. We handle this issue by taking an orientation-preserving diffeomorphism $k \colon S^3 \to S^3$ such that $k(0, i)$ and $k(0, -i)$ belong to different fibers of $h$. Finally put $$f = h \circ k \circ \varSigma f.$$ It follows that $f$ is a genus-1 Lefschetz fibration with two critical points $a_\pm$ such that $f(a_+) \neq f(a_-)$.

The singular fibers of $f$ are two immersed spheres $\Sigma_\pm = f^{-1}(f(a_\pm))$, each one with one positive or negative node singularity. Notice that $\Sigma_\pm - \{a_\pm\} \cong S^1 \times \R$.

By the above construction, we get an almost complex structure $\tilde J$ on $S^4 - \{a_-\} \cong \R^4$, such that $f_| \colon (\R^4, \tilde J) \to S^2$ is pseudo-holomorphic, and this concludes the proof of Corollary~\ref{pseudo/thm}.

\begin{remark}
In \cite{DKZ2015} we show by different methods that $\tilde J$ can be taken integrable.
\end{remark}

\section{Almost complex 4-manifolds in $\R^6$}\label{dim4/sec}

In this section we prove Theorem~\ref{4-manifolds}.
We will make use of the following theorem.

\begin{theorem}[Cappell-Shaneson \cite{CS79, Ru82}]~\label{CS}
A closed, orientable, smooth $4$-manifold $M$ embeds in $\R^6$ if and only if $w_2(M) = 0$ and $\sigma(M) = 0$.  
\end{theorem}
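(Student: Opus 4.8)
The plan is to prove the two implications separately. The forward (necessity) implication is elementary characteristic-class bookkeeping; the reverse (sufficiency) implication is where essentially all the difficulty lies, and it is the technical heart of Cappell--Shaneson's work.

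For necessity, suppose $M$ embeds in $\R^6 \subset S^6$ with normal bundle $\nu$, an oriented rank-$2$ real bundle over $M$. I would first invoke the self-intersection formula: if $\eta \in H^2(S^6;\Z)$ denotes the Poincar\'e--Lefschetz dual of $[M] \in H_4(S^6;\Z)$, then the normal Euler class satisfies $e(\nu) = i^* \eta$, where $i \colon M \hookrightarrow S^6$. Since $H^2(S^6) = 0$ (equivalently $H_4(S^6) = 0$), we get $\eta = 0$ and hence $e(\nu) = 0$. Oriented rank-$2$ bundles over the $4$-complex $M$ are classified by their Euler class, as $\BSO(2) \simeq \CP^\infty$; therefore $\nu$ is trivial. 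Whitney duality $w(M)\,w(\nu) = 1$ then forces $w_2(M) = 0$, while the resulting stable triviality $TM \oplus \nu \cong \varepsilon^6$ gives $p_1(M) = 0$, whence $\sigma(M) = \tfrac{1}{3}\langle p_1(M), [M] \rangle = 0$ by the Hirzebruch signature theorem. (Alternatively, triviality of $\nu$ yields a Seifert hypersurface $W^5 \subset S^6$ with $\partial W = M$, and $\sigma(M) = \sigma(\partial W) = 0$ by oriented-cobordism invariance of the signature.)

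For sufficiency, assume $w_2(M) = 0$ and $\sigma(M) = 0$, and first produce an immersion. Since $w_1(M) = w_2(M) = 0$, and since $H^4(M;\Z) \cong \Z$ is torsion-free so that $\sigma(M) = 0$ forces $p_1(M) = 0$ as a class, all the stable framing obstructions vanish: the one in $H^2(M;\Z_2)$ is $w_2(M) = 0$, and the top one in $H^4(M;\pi_3(\SO)) \cong \Z$ is a multiple of $p_1(M) = 0$. Thus $TM$ is stably trivial, and being in the stable range this already gives $TM \oplus \varepsilon^2 \cong \varepsilon^6$. This bundle monomorphism is a point of $\mathrm{Mon}(TM, T\R^6)$, so by the Hirsch--Smale theorem (Theorem~\ref{SH}) it is realized by an immersion $f \colon M \looparrowright \R^6$ with trivial normal bundle. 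After a small perturbation $f$ is self-transverse, and its double-point set is a closed surface, since the double points of a codimension-$2$ immersion of a $4$-manifold are $2$-dimensional.

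The decisive and hardest step is the removal of this double-point surface. The plan is to span the double points by framed immersed Whitney disks and to cancel them in pairs by Whitney moves; the ambient dimension $6$ is large enough for the Whitney trick to be available, but the embeddedness and framing of the Whitney disks, and the algebraic obstruction to cancelling the double-point surface entirely, are controlled by a surgery-theoretic invariant that for a $4$-manifold with $w_2 = 0$ reduces precisely to the signature. Thus $\sigma(M) = 0$ is exactly the vanishing of the obstruction to eliminating all double points, converting $f$ into an embedding. I expect this last step --- organizing the double-point surface and verifying that $\sigma(M) = 0$ suffices to cancel it --- to be by far the main obstacle, and it is the reason the full argument requires the machinery of \cite{CS79} (see also Ruberman \cite{Ru82}).
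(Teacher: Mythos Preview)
The paper does not prove Theorem~\ref{CS} at all: it is quoted from the literature \cite{CS79, Ru82} and used as a black box in the proof of Theorem~\ref{4-manifolds}. So there is no ``paper's own proof'' to compare your proposal against. What the paper does contain that overlaps with your necessity argument is the observation, in the proof of Theorem~\ref{4-manifolds}, that the normal bundle of a codimension-2 embedding in Euclidean space is trivial (citing \cite{Ki89}); from this the paper reads off $c(M,J)=1$, hence $w_2(M)=0$ and $\chi(M)=0$, and invokes Theorem~\ref{CS} itself for $\sigma(M)=0$.

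Your necessity argument is correct and self-contained: the vanishing of $e(\nu)$ via $H_4(S^6)=0$, the classification of oriented plane bundles by $e$, and the resulting $w_2=0$, $p_1=0$, $\sigma=0$ are all fine. Your sufficiency outline is also accurate as far as it goes: the stable-framing obstruction count is right (noting that $\sigma=0$ forces $p_1=0$ since $H^4(M;\Z)\cong\Z$), the Hirsch--Smale step produces the immersion with trivial normal bundle, and you correctly flag that eliminating the double-point surface is the genuine content of \cite{CS79}. Just be aware that the actual Cappell--Shaneson argument is surgery-theoretic rather than a direct Whitney-move cancellation as you describe; your heuristic that ``the obstruction reduces to the signature'' is morally right but the mechanism in \cite{CS79} is not literally organized as framed Whitney disks in the ambient $\R^6$. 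Since the paper treats the whole theorem as input, none of this affects anything downstream.
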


\begin{proof}[Proof of Theorem~\ref{4-manifolds}]
We begin with the `only if' part. 
Take a pseudo-holomorphic embedding $f \colon (M, J)\to (\R^6, \tilde{J})$. 
Since $f$ is a codimension-2 embedding in a Euclidean space, the normal complex line bundle $\nu_f$ is trivial (see for example \cite[Sec. 8, Theorem 2]{Ki89}), so $$c(M, J) = c(M, J)\, c(\nu_f) = f^*(c(\R^6, \tilde{J})) = 1.$$ 
Therefore, $c_1(M, J)=0$ and $\chi(M)= \langle c_2(M, J), [M]\rangle = 0$. Moreover, $w_2(M) = 0$ and $\sigma (M) = 0$ by Theorem~\ref{CS}.

Next, we prove the `if' part. Since $\sigma(M) = 0$ and $w_2(M) \equiv c_1(M, J) \pmod 2$ is zero, there is an embedding $f \colon M\to \R^6$ by Theorem~\ref{CS}. 
The normal bundle $\nu_f$ is trivial \cite{Ki89}. Let $J'$ be a complex structure on $\nu_f$ such that $J'$ is compatible with the normal orientation induced by the embedding $f$.

Thus, we get an almost complex structure $J\oplus J'$ on $(T\R^6)_{|M}\cong TM \oplus \nu_f$, that is a map $g \colon M \to \widetilde\Gamma(3)$. 
Now we show that $g$ is null-homotopic. Take a cell decomposition $M^{(0)}\subset M^{(1)}\subset M^{(2)}\subset M^{(3)}\subset M^{(4)}=M$, where $M^{(i)}$ is the $i$-skeleton of $M$.
 
Recall that 
\begin{align*}
\pi_{i}(\widetilde\Gamma(3))=\pi_{i}(\Gamma (3))=
\begin{cases}
 0 & \mbox{for } i= 1,3,4 \\
\Z & \mbox{for } i= 2.
\end{cases}
\end{align*}
Therefore, the only obstruction $\Omega(g)$ for $g$ to be null-homotopic lies in $$H^2(M; \pi_2(\Gamma(3))) = H^2(M).$$
In other words, $g$ is homotopic to a constant map over the $2$-skeleton $M^{(2)}$ if and only if the element $\Omega(g) \in H^2(M)$ is zero. Moreover, once $g$ has been homotoped to a constant on $M^{(2)}$, we can extend this homotopy over higher skeleta because $\pi_3(\Gamma(3)) = \pi_4(\Gamma(3)) = 0$.

Since $c_1(M, J) = 0$ and since $H^2(M)$ has no 2-torsion, the following lemma implies that $\Omega(g) = 0$, and this concludes the proof.
\end{proof}

\begin{lemma}
$2\Omega(g) = c_1(M, J)$.
\end{lemma}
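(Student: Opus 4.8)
The plan is to realize both sides of the claimed identity as pullbacks under $g$ of universal classes on $\widetilde\Gamma(3)$, and then to compare those universal classes by evaluating them on the generator of $\pi_2(\widetilde\Gamma(3))$. First I would introduce the tautological complex rank-$3$ bundle $E \to \widetilde\Gamma(3)$ whose fiber over a complex structure $J_0 \in \widetilde\Gamma(3)$ is $\R^6$ equipped with $J_0$. By the very definition of $g$, its pullback is $g^*E = (TM \oplus \nu_f, J \oplus J')$, so that $g^* c_1(E) = c_1(M, J) + c_1(\nu_f, J')$. Since $\nu_f$ is a trivial real rank-$2$ bundle its Euler class vanishes, and $c_1(\nu_f, J') = e(\nu_f) = 0$; hence $g^* c_1(E) = c_1(M, J)$.

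Second I would identify the obstruction. Since $\widetilde\Gamma(3) \simeq \Gamma(3)$ is simply connected with $\pi_2 \cong \Z$, Hurewicz gives $H_2 \cong \pi_2 \cong \Z$, and standard obstruction theory yields $\Omega(g) = g^* \iota$, where $\iota \in H^2(\widetilde\Gamma(3); \pi_2(\widetilde\Gamma(3))) \cong H^2(\widetilde\Gamma(3); \Z)$ is the fundamental class, i.e. the generator corresponding under Hurewicz to $\mathrm{id} \in \Hom(H_2, \pi_2)$. By naturality of the pullback, the lemma therefore reduces to the universal identity $c_1(E) = 2\iota$ in $H^2(\widetilde\Gamma(3); \Z) \cong \Z$.

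Third I would verify this universal identity by pairing with the generator $\gamma$ of $\pi_2(\Gamma(3)) \cong H_2(\Gamma(3))$, transporting everything across the homotopy equivalence $\widetilde\Gamma(3)\simeq\Gamma(3)$. Evaluating the fundamental class gives $\langle \iota, \gamma\rangle = 1$. To evaluate $c_1(E)$ I would use the fibration $\U(3) \to \SO(6) \to \Gamma(3)$, over which $E$ is the bundle associated to the standard representation. The connecting homomorphism $\partial \colon \pi_2(\Gamma(3)) \to \pi_1(\U(3)) \cong \Z$ is injective (as $\pi_2(\SO(6)) = 0$) with image $\ker(i_* \colon \pi_1(\U(3)) \to \pi_1(\SO(6)))$; since a $2\pi$-rotation in a $2$-plane generates $\pi_1(\SO(6)) \cong \Z_2$, the map $i_*$ is reduction mod $2$, so $\partial(\gamma) = \pm 2$. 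The restriction $\gamma^* E$ is then the rank-$3$ bundle over $S^2$ with clutching function $\partial(\gamma)$, whose first Chern number is $\det_*(\partial \gamma)$; as $\det_* \colon \pi_1(\U(3)) \to \pi_1(\U(1))$ is an isomorphism, we get $\langle c_1(E), \gamma\rangle = \pm 2$. Fixing the orientations so that both pairings are positive yields $c_1(E) = 2\iota$, and pulling back by $g$ gives $c_1(M, J) = 2\Omega(g)$.

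The main obstacle is the factor $2$ in the third step: it rests entirely on the fact that $\ker(\pi_1(\U(3)) \to \pi_1(\SO(6))) = 2\Z$ rather than all of $\Z$, equivalently that the generator of $\pi_2(\SO(6)/\U(3))$ lifts to a loop of complex structures realizing a double rotation. A secondary point requiring care is fixing the sign conventions (the orientation of $\gamma$ and the sign of the generator $\iota$) so that the identity comes out as $+c_1(M, J)$; for the application in Theorem~\ref{4-manifolds} only the torsion-insensitive consequence matters, so the sign is immaterial to the conclusion.
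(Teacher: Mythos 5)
Your proposal is correct, and it proves the same universal fact as the paper, but by a genuinely different technical route. The paper passes to classifying spaces: the composite of $g$ with the fiber inclusion $\iota \colon \Gamma(3) \to \BU(3)$ of the fibration $\Gamma(3) \to \BU(3) \to \BSO(6)$ classifies $TM \oplus \nu_f$, the primary obstruction for null-homotoping a map to $\BU(3)$ over the $2$-skeleton is the pullback of $c_1$, and the factor $2$ is read off from the exact sequence $0 \to \pi_2(\Gamma(3)) \to \pi_2(\BU(3)) \to \pi_2(\BSO(6)) \to 0$, which is $0 \to \Z \to \Z \to \Z_2 \to 0$. You never leave $\widetilde\Gamma(3)$: you write both sides of the lemma as $g$-pullbacks of universal classes (the fundamental class, and $c_1(E)$ of the tautological bundle $E$), and you establish that $c_1(E)$ is twice the fundamental class by evaluating against the generator of $\pi_2$, using the connecting homomorphism $\partial$ of $\U(3) \to \SO(6) \to \Gamma(3)$, the clutching description of bundles over $S^2$, and the isomorphism $\det_* \colon \pi_1(\U(3)) \to \pi_1(\U(1))$. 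The two arguments are equivalent in content: $E$ is exactly the bundle classified by the paper's $\iota$, and under $\pi_2(\BU(3)) \cong \pi_1(\U(3))$ your $\partial$ is the paper's $\iota_*$, so in both proofs the $2$ ultimately comes from $\pi_1(\U(3)) \to \pi_1(\SO(6))$ being reduction mod $2$. What your route buys: it is finite-dimensional and self-contained (no appeal to $\BU(3) \to \BSO(6)$ being a fibration with fiber $\Gamma(3)$), and it makes explicit two points the paper leaves implicit, namely that $\Omega(g)$ equals the pullback of the fundamental class and that $c_1(\nu_f, J') = e(\nu_f) = 0$. What the paper's route buys: brevity, since once $\iota \circ g$ is recognized as a classifying map, identifying its primary obstruction with $c_1(TM \oplus \nu_f)$ needs no clutching computation. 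Your closing remark on signs is also accurate: both proofs determine $2\Omega(g) = \pm c_1(M, J)$ up to orientation conventions, and only the torsion-insensitive consequence is needed in Theorem~\ref{4-manifolds}.
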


\begin{proof}
For a topological group $G$, we denote by $BG$ the classifying space of $G$, so that there is a natural bijection between the set of equivalence classes of principal $G$-bundles over any CW-complex $X$, and the set of homotopy classes of maps $[X, BG]$, see \cite{Mi56a, Mi56b}.

The inclusion $\U(3)\to\SO(6)$ induces a fibration $\BU(3)\to \BSO(6)$ with fiber $\Gamma (3)$ \cite[p. 680]{Wh78}. 
Let $\iota \colon \Gamma(3)\to \BU(3)$ be the inclusion. 
Then the composition $\iota \circ g \colon M\to \BU(3)$ is the classifying map of the complex vector bundle $TM\oplus \nu _f$.

The homotopy exact sequence for the fibration $\Gamma (3)\to \BU(3)\to \BSO(6)$, that is
$$\pi_3(\BSO(6))\to \pi_2(\Gamma(3)) \to \pi_2(\BU(3)) \to \pi_2(\BSO(6))\to \pi_1(\Gamma(3))$$ is given by $0\to \Z \to \Z \to \Z_2 \to 0$. 
Hence, the map $\iota_{\ast } \colon \pi_2(\Gamma(3)) \to \pi_2(\BU(3))$ is the double map in $\Z$. 
Thus, $\Omega $ is mapped to $$2\Omega \in H^2(M;\pi_2(\BU(3)))=H^2(M),$$ which is the obstruction for $\iota \circ g$ to be null-homotopic over $M^{(2)}$. 

On the other hand, $(\iota \circ g)_{|M^{(2)}}$ determines the class $(\iota \circ g)^{\ast }c_1$, where $c_1$ is a generator of $H^2(\BU(3))\cong \Z$.    
Hence, $2\Omega(g)$ is equal to the first Chern class of the complex vector bundle $TM\oplus \nu _{f}$. 
Therefore, we obtain $2 \Omega(g) =c_1(M, J\oplus J')=c_1(M, J)$. 
We note that a similar argument can be found in the proof of Theorem 8.18 in \cite{Ha08} and in Section 8.1 of \cite{Ge08}. 
\end{proof}

\section{Final remarks}\label{fin-rmk/sec}

\begin{enumerate}
\item In the proofs of Theorems~\ref{PHE} and \ref{4-manifolds} we actually showed that any given embedding $f$ of $M$ into the ambient Euclidean space can be made pseudo-holomorphic with respect to a suitable almost complex structure on the ambient space (that is, $\R^{4m+2}$ or $\R^6$). The same holds for a given immersion into $\R^{4m}$ in the proof of Theorem~\ref{PHI} with minimal self-intersection, provided it is self-orthogonal at the self-intersection points.

\item By a theorem of Dold and Whitney \cite{DW59} and by Hirzebruch's signature formula \cite{Hi66}, 
we have that $w_2(M)=0$ and $\sigma (M)=\chi (M)=0$ if and only if $M$ is parallelizable. 

In general, if $M^{2m}$ has trivial Euler characteristic and embeds in $\R^{2m+2}$, then $M^{2m}$ is parallelizable. 
This follows from Kervaire's theorem about the generalized curvatura integra \cite{Ke57}. We quickly sketch this proof.

Let $M^{n}\subset \R^{n+k}$, $k > 1$, be a framed submanifold, that is an embedded submanifold $M$ along with a trivialization of the normal bundle, which is essentially a map $G \colon M \to V_{k}(\R^{n+k})$ (the {\sl generalized Gauss map}). 
Since $V_{k}(\R^{n+k})$ is $(n-1)$-connected, the element
\begin{align*}
G_{\ast}([M])\in H_{n}(V_{k}(\R^{n+k}))\cong \pi_{n}(V_{k}(\R^{n+k})) \cong
  \begin{cases}
  \Z & n\ \text{even}\\
  \Z_{2} & n\ \text{odd,}
  \end{cases}
\end{align*}
which is called the {\sl generalized curvatura integra,} is the only obstruction for $G$ to be null-homotopic.

In \cite{Ke57}, Kervaire expressed $G_{\ast}([M])$ in terms of 
the Hopf invariant of the framed submanifold $M$, 
the Euler characteristic $\chi (M)$, 
and the Kervaire semi-characteristic of $M$. 
In particular, for $n = 2m$, $G_{\ast}([M]) = \frac{1}{2}(\chi (M))$. 

Therefore, for a real oriented codimension-2 submanifold $M^{2m} \subset \C^{m+1}$ with $\chi (M) = 0$, we have that the generalized curvatura integra $G_{\ast}([M])$ is zero for any trivialization of the normal bundle. 
This implies that $G \colon M \to V_{2}(\R^{2m+2})$ is homotopic to 
the constant map given by the complex vector field $\partial \slash \partial z_{m+1}$, where $(z_1,\dots ,z_{m+1})\in \C^{m+1}$ are the standard coordinates. 
Hence, $T M$ is homotopic, as a subbundle of $T \C^{m+1}$, to the trivial bundle $M \times \C^m$.
Therefore, $M^{2m}$ is parallelizable.

These considerations lead to an alternative proof of Theorem~\ref{4-manifolds}. Let $(M, J)$ be an almost complex 4-manifold which satisfies the hypotheses of Theorem~\ref{4-manifolds}. So, we can consider $M$ as a real submanifold of $\C^3$, having trivial curvatura integra.

It follows that a trivialization of the normal bundle is homotopic to the vector field $\partial \slash \partial z_{3}$, and the tangent bundle is homotopic to $M \times \C^2$ in $(T\C^3)_{|M} = M \times \C^3$. This way, $M$ inherits another almost complex structure $J_1$ induced by the identification $TM \cong M \times \C^2$.

The assumption that $H^2(M)$ has no $2$-torsion is equivalent to the condition that spin$^{c}$ structures on $M$ are classified by the first Chern class (see for example Theorem 2.4.9 in \cite{GS99}). 
A spin$^{c}$ structure on $M$ can be identified with the homotopy class of a complex structure over the $2$-skeleton $M^{(2)}$ that extends over the $3$-skeleton $M^{(3)}$ \cite{Go97}. 

It follows that the two almost complex structure $J$ and $J_1$ on $M$ are homotopic over $M^{(2)}$  
because both of them have trivial first Chern class.

Let $J'$ be a complex structure on the normal bundle of $M$.
We have that the standard complex structure on the bundle $(T\C^3)_{|M}$ is homotopic, over $M^{(2)}$, with the Whitney sum $J\oplus J'$. 
Therefore, the obstruction $\Omega(g)$ defined in the previous Section is trivial, implying that the map $g \colon M \to \widetilde\Gamma(3)$ is null-homotopic.   
\end{enumerate}

\section*{Acknowledgments}

A.J. Di Scala and D. Zuddas are members of the group GNSAGA of INdAM, Italy.

\let\emph\textsl

\end{document}